\def\sD{{\mathfrak D}}   \def\sE{{\mathfrak E}}   \def\sF{{\mathfrak F}}
   \def\sH{{\mathfrak H}}   
   \def\sK{{\mathfrak K}}
\def\st{{\mathfrak t}}
      \def\dC{{\mathbb C}}
\def\s{{\rm s}}
\def\r{{\rm r}}
\def\I{{\rm i}}
\def\bB{{\boldsymbol B}}
\def\wt#1{{{\widetilde #1} }}
\def\wh#1{{{\widehat #1} }}
\def\bm\chi{\mbox{\boldmath$\chi$}}
\def\half{{\frac{1}{2}}}
\def\RE{{\rm Re\,}}
\def\IM{{\rm Im\,}}
\def\ker{{\rm ker\,}}
\def\ran{{\rm ran\,}}
\def\cran{{\rm \overline{ran}\,}}
\def\dom{{\rm dom\,}}
\def\mul{{\rm mul\,}}
\def\cdom{{\rm d\overline{om}\,}}
\def\clos{{\rm clos\,}}
\def\diag{{\rm diag\,}}
\let\xker=\ker \def\ker{{\xker\,}}
\def\uphar{{\upharpoonright\,}}
\def\op{{\rm s}}
\DeclareMathOperator{\hoplus}{\, \widehat \oplus \,}
\newtheorem{theorem}{Theorem}[section]
\newtheorem{corollary}[theorem]{Corollary}
\newtheorem{lemma}[theorem]{Lemma}
\theoremstyle{definition}
\newtheorem{example}[theorem]{Example}
\newtheorem{remark}[theorem]{Remark}
\numberwithin{equation}{section}
\begin{document}

\title
{A class of sectorial relations and the associated closed forms}

\author{S.~Hassi}
\address{
Department of Mathematics and Statistics \\
University of Vaasa \\
P.O. Box 700, 65101 Vaasa \\
Finland}
\email{sha@uwasa.fi}

\author{H.S.V.~de~Snoo}
\address{
Bernoulli Institute for Mathematics, Computer Science
and Artificial Intelligence\\
University of Groningen \\
P.O. Box 407, 9700 AK Groningen \\
Nederland}
\email{hsvdesnoo@gmail.com}

\dedicatory{Dedicated to V.E. Katsnelson
on the occasion of his 75th birthday}

\thanks{The second author thanks the University of Vaasa for its hospitality during the
preparation of this work.}


\keywords{Sectorial relation, Friedrichs extension, Kre\u{\i}n extension, extremal extension,
form sum.}

\subjclass[2010]{Primary 47B44; Secondary  47A06, 47A07, 47B65}

\begin{abstract}
Let $T$ be a closed linear relation from a Hilbert space $\sH$
to a Hilbert space $\sK$ and let $B \in \mathbf{B}(\sK)$ be selfadjoint.
 It will be shown that the relation $T^{*}(I+iB)T$ is maximal sectorial
via a matrix decomposition of $B$ with respect to the orthogonal
decomposition $\sH=\cdom T^* \oplus \mul T$.
This leads to an explicit expression of the corresponding closed
sectorial form. These results include the case where $\mul T$ is
invariant under $B$. The more general description makes it possible
to give an expression for the extremal maximal sectorial extensions
 of the sum of sectorial relations. In particular, one can characterize
when the form sum extension is extremal.
\end{abstract}

\maketitle

\section{Introduction}

A  linear relation $H$ in a Hilbert space $\sH$ is said to be
\textit{accretive} if $\RE (h',h) \geq 0$, $\{h,h'\} \in H$.
Note that the closure of an accretive relation is also accretive.
An accretive relation $H$ in $\sH$ is said to be
\textit{maximal accretive} if the existence of an accretive relation
$H'$ in $\sH$ with $H \subset H'$ implies $H' = H$.
A maximal accretive relation is automatically closed.
In a similar way, a  linear relation $H$ in a Hilbert space $\sH$
is said to be \textit{sectorial} with
vertex at the origin and semi-angle $\alpha$,
$\alpha \in [0,\pi /2)$, if
\begin{equation}\label{sect0-}
 | \IM (h',h) | \le (\tan \alpha) \,\RE (h',h),
 \quad  \{ h, h'\} \in H.
\end{equation}
The closure of a sectorial relation is also sectorial.
A sectorial relation $H$ in a Hilbert space $\sH$ is said to be
\textit{maximal sectorial} if the existence of a sectorial relation
$H'$ in $\sH$ with $H \subset H'$ implies $H' = H$.
A maximal sectorial relation is automatically closed.
Note that a sectorial relation is maximal sectorial if and only if
it is maximal as an accretive relation; see \cite{HSnSz09}.

A sesquilinear form $\st=\st[\cdot, \cdot]$ in a Hilbert space $\sH$
is a mapping from $\dom \st \subset \sH$  to $\dC$ which is linear
in its first entry and antilinear in its second entry. The adjoint $\st^{*}$
is defined by $\st^{*}[h,k]=\overline{\st[k,h]}$, $h,k \in \dom \st$;
for the diagonal of $\st$ the notation $\st[\cdot]$ will be used.
A (sesquilinear) form is said to be sectorial with vertex at the
origin and semi-angle $\alpha$, $\alpha \in [0,\pi/2)$, if
\begin{equation}\label{sec}
 |\st_{\I} [h]|  \le  (\tan \alpha) \, \st_{\r} [h],
 \quad
 h \in \dom \st,
\end{equation}
where the real part $\st_{\rm r}$ and the imaginary part
$\st_{\I}$ are defined by
\begin{equation}\label{secc}
\st_{\rm r}=\frac{\st+\st^{*}}{2}, \quad \st_{\rm i}=\frac{\st-\st^{*}}{2i},
\quad \dom \st_{\rm r}=\dom \st_{\I}=\dom \st.
\end{equation}
A sesquilinear form will be called a form in the rest of this note.
Observe that the form $\st_{\rm r}$ is nonnegative and that the form
$\st_{\rm i}$ is symmetric, while  $\st=\st_{\rm r}+i\,\st_{\rm i}$.
 A sectorial form $\st$  is said to be \textit{closed} if
 \[
h_n \to h, \quad \st[h_n-h_m] \to 0 \quad \Rightarrow \quad
h \in \dom \st \quad \mbox{and} \quad \st[h_n-h] \to 0.
\]
A sectorial form $\st$ is closed if and only if its real part $\st_r$ is closed;
see \cite{Kato}.

The connection between maximal sectorial relations and closed
sectorial forms is given in the so-called first representation theorem;
cf. \cite{Ar96}, \cite{HSSW17},  \cite{Kato}, \cite{RB90}.

\begin{theorem}\label{s-first}
Let $\st$ be a closed sectorial form in a Hilbert space $\sH$
with vertex at the origin and semi-angle $\alpha$, $\alpha \in [0,\pi/2)$.
Then there exists a unique maximal sectorial relation $H$ in $\sH$
with vertex at the origin and semi-angle $\alpha$
in $\sH$ such that
\begin{equation}\label{glij}
\dom H \subset \dom \st,
\end{equation}
and  for every $\{ h,h' \} \in H$ and $k \in \dom \st$ one has
\begin{equation}\label{eqn:firstrepresentation}
 \st[h,k] = ( h', k ).
\end{equation}
Conversely, for every maximal sectorial relation $H$ with vertex at
the origin and semi-angle $\alpha$, $\alpha \in [0,\pi/2)$, there
exists a unique closed sectorial form $\st$ such that \eqref{glij}
and \eqref{eqn:firstrepresentation} are satisfied.
\end{theorem}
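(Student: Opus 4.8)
The plan is to establish the two directions separately: the relation is built from the form by a Lax--Milgram argument, and the form is recovered from the relation by a closure construction. For the passage from a form to a relation I would first exploit closedness of $\st$. Since $\st$ is closed its real part $\st_{\rm r}$ is a closed nonnegative form, so $\dom\st$ carries the Hilbert space structure
\[
(h,k)_\st := \st_{\rm r}[h,k] + (h,k), \qquad \|h\|_\st^2 = \st_{\rm r}[h] + \|h\|^2,
\]
and the embedding into $\sH$ is a contraction. On this space the shifted form $\st[h,k]+(h,k)$ is bounded, by the sectoriality estimate $|\st[h,k]|\le(1+\tan\alpha)\,\st_{\rm r}[h]^{1/2}\st_{\rm r}[k]^{1/2}$, and it is coercive, since its real part equals $\|h\|_\st^2$. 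I then \emph{define} $H$ by putting $\{h,h'\}\in H$ whenever $h\in\dom\st$ and $\st[h,k]=(h',k)$ for all $k\in\dom\st$, so that \eqref{glij} and \eqref{eqn:firstrepresentation} hold by construction.

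Three points finish this direction. Sectoriality of $H$ with the same semi-angle follows by taking $k=h$, which gives $\RE(h',h)=\st_{\rm r}[h]$ and $|\IM(h',h)|=|\st_{\rm i}[h]|\le(\tan\alpha)\,\st_{\rm r}[h]$. Maximality follows from surjectivity of $H+I$: for $f\in\sH$ the functional $k\mapsto(f,k)$ is bounded and antilinear on $\dom\st$, so Lax--Milgram yields a unique $u\in\dom\st$ with $\st[u,k]+(u,k)=(f,k)$ for all $k\in\dom\st$, that is $\{u,f-u\}\in H$ and $f\in\ran(H+I)$; any accretive relation with $\ran(H+I)=\sH$ is maximal accretive, hence maximal sectorial by the criterion recalled in the introduction. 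Uniqueness is then immediate, since any maximal sectorial $H'$ obeying \eqref{glij} and \eqref{eqn:firstrepresentation} is contained in the $H$ just constructed, whence $H'=H$ by maximality of $H'$.

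For the converse I start from a maximal sectorial $H$ and use that $\mul H=(\cdom H)^\perp$ (standard for maximal accretive relations), so that for two pairs $\{h,h_1'\},\{h,h_2'\}\in H$ the difference $h_1'-h_2'$ is orthogonal to $\dom H$. Consequently
\[
\st_0[h,k] := (h',k), \qquad \{h,h'\}\in H,\ \ k\in\dom H,
\]
is a well-defined sectorial form on $\dom H$ (again take $k=h$), and I would set $\st$ equal to its closure. The main obstacle is exactly to show that $\st_0$ is closable, equivalently that the nonnegative form $\st_{0,{\rm r}}$ is closable; this is where maximal sectoriality is essential. The cleanest way to use it is to pass to $A:=(H+I)^{-1}$, which is bounded, everywhere defined and accretive with $\ker A=\mul H$: its nonnegative selfadjoint real part $\tfrac{1}{2}(A+A^*)$ generates a closed nonnegative form whose restriction to $\dom H$ is $\st_{0,{\rm r}}$, and a restriction of a closed nonnegative form is automatically closable. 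I expect this identification to be the only genuinely delicate point.

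Once $\st:=\overline{\st_0}$ is in hand it is closed and sectorial with $\dom H\subset\dom\st$, giving \eqref{glij}. The identity \eqref{eqn:firstrepresentation} holds on $\dom H$ by definition and extends to all $k\in\dom\st$ by approximating $k$ in the form norm and using continuity of $k\mapsto\st[h,k]$ together with $\|k_n-k\|\le\|k_n-k\|_\st$. Feeding $\st$ into the first direction produces a maximal sectorial relation containing $H$, so the two coincide and $\st$ represents $H$. Uniqueness of the form reduces to showing that $\dom H$ is a core of every representing closed sectorial form; this again falls out of the Lax--Milgram picture, where $\ran((H+I)^{-1})=\dom H$ is dense in the form space, so that two representing forms agreeing on $\dom H$ must be equal. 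All the remaining arguments are routine once the form Hilbert space and the maximality criterion are in place, the closability step being the sole real hurdle.
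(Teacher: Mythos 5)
You should first be aware that the paper does not prove Theorem~\ref{s-first} at all: it is stated as a known result, with references to \cite{Ar96}, \cite{HSSW17}, \cite{Kato}, \cite{RB90}, so your attempt can only be measured against the standard proofs in those sources. Your forward direction (closed sectorial form $\Rightarrow$ maximal sectorial relation) is correct and is essentially the classical argument: completeness of $\dom \st$ under $\st_{\rm r}[\cdot]+\|\cdot\|^{2}$, the bound $|\st[h,k]|\le(1+\tan\alpha)\,\st_{\rm r}[h]^{1/2}\st_{\rm r}[k]^{1/2}$, Lax--Milgram to obtain $\ran (H+I)=\sH$, and the fact that an accretive relation with $\ran(H+I)=\sH$ is maximal accretive; this also treats non-densely defined forms correctly, the multivalued part $\mul H=(\dom \st)^{\perp}$ appearing automatically. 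The uniqueness arguments on both sides are fine in outline as well.

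The converse direction, however, contains a genuine gap, and it sits exactly at the step you yourself single out as the crux. You claim that, with $A=(H+I)^{-1}$, the closed nonnegative form generated by $\frac{1}{2}(A+A^{*})$ restricts on $\dom H$ to $\st_{0,{\rm r}}$, so that $\st_{0,{\rm r}}$ is closable as a restriction of a closed form. This identification is false: it conflates the real part of the resolvent with the resolvent of the real part. Already for a nonnegative selfadjoint unbounded operator $H$ one has $\st_{0,{\rm r}}[h,k]=(Hh,k)$ on $\dom H$, an unbounded form, whereas the form of $\frac{1}{2}(A+A^{*})=(H+I)^{-1}$ restricted to $\dom H$ is the bounded form $((H+I)^{-1}h,k)$; a restriction of a bounded form cannot be unbounded. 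What is true instead is that, writing $h=Af$ and $k=Ag$,
\[
\st_{0,{\rm r}}[h,k]
=\Bigl(\bigl(\tfrac{1}{2}(A+A^{*})-A^{*}A\bigr)f,\,g\Bigr),
\]
which is a bounded nonnegative form in the resolvent variables $f,g$, not in $h,k$. Closability of $\st_{0,{\rm r}}$ amounts to the implication: if $Af_{n}\to 0$ and $Q^{1/2}f_{n}$ is Cauchy, where $Q=\frac{1}{2}(A+A^{*})-A^{*}A\ge 0$, then $Q^{1/2}f_{n}\to 0$; nothing in your sketch addresses this. It is also telling that your closability argument uses only accretivity of $H$ and never the sector condition, whereas the known proofs of precisely this statement (the theorem in \cite{Kato}, Chapter VI, that a sectorial operator is form closable, applied to the operator part of $H$ after splitting off $\mul H=(\dom H)^{\perp}$, as you do) use sectoriality in an essential way. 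So the skeleton of your converse --- form $\st_{0}$, close it, apply the forward direction plus maximality of $H$, deduce uniqueness from the core property --- is the standard and correct one, but its load-bearing step is unproved, and the specific argument you offer for it is wrong.
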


This result contains as a special case the connection between nonnegative selfadjoint relations and
closed nonnegative forms. The nonnegative selfadjoint relation $H_r$
corresponding to the real part $\st_{\r}$ of the form $\st$ is called the real part of $H$;
this notion should not to be confused with the real part introduced in \cite{HSnSz09}. \\

In the theory of sectorial operators one encounters
expressions   $T^{*}(I+iB)T$ where $T$ is a linear operator
from a Hilbert space $\sH$ to a Hilbert space $\sK$ and $B \in \mathbf{B}(\sK)$
is a selfadjoint operator.
 In the context of sectorial relations the operator $T$
may be replaced by a linear relation $T$.
A frequently used observation is that when $T$ is a closed linear relation
and the multivalued part $\mul T$ is invariant under $B$,
then the product is a maximal sectorial relation; cf. \cite{HSSW17}.
 However, in fact,  the relation
\begin{equation}\label{tbt}
T^{*}(I+iB)T
\end{equation}
is maximal sectorial for any closed linear relation $T$.
This will be shown in this note via a matrix decomposition of $B$
with respect to the orthogonal
decomposition $\sH=\cdom T^* \oplus \mul T$.
In addition the closed sectorial form
corresponding to  $T^{*}(I+iB)T$ will be determined.
The main argument consists of a reduction to the case where $T$ is an operator.
For the convenience of the reader the arguments in the operator case
are included.
Note that if $T$ is not closed, then $T^{*}(I+iB)T$ is a sectorial relation which
may have maximal sectorial extensions, such as $T^{*}(I+iB)T^{**}$ and
some of these extensions have been determined in \cite{HSS19}; cf. \cite{ST12}.

It is clear that the sum of two sectorial relations is a sectorial relation
and there will be maximal sectorial extensions.
In \cite{HSS19} the Friedrichs extension has been determined
in general, while the Kre\u{\i}n extension has been determined
only under additional conditions. As an application of the above results
for the relation in \eqref{tbt} the Kre\u{\i}n extension and, in fact,
all extremal maximal sectorial extensions
of the sum of  two sectorial relations will be characterized in general.
With this characterization one can determine when the form sum extension is extremal.

\section{A preliminary result}

The first case to be considered is the  linear relation $T^{*}(I+iB)T$, where
$T$ a closed linear operator, which is not necessarily densely defined,
and $B \in \mathbf{B}(\sK)$ is selfadjoint.
In this case one can write down a natural closed sectorial form and
verify that $T^{*}(I+iB)T$ is the maximal sectorial relation corresponding
to the form via Theorem \ref{s-first}.

\begin{theorem}\label{s-repr0o}
Let $T$ be a closed linear operator from a Hilbert space $\sH$ to a
Hilbert space $\sK$
 and let the  operator $B \in \mathbf{B}(\sK)$ be selfadjoint.
Then the form $\st$ in $\sH$ defined by
\begin{equation}\label{henil}
 \st[h,k]=((I+iB)T h ,T k),
 \quad h,k \in \dom \st=\dom T,
\end{equation}
is closed and sectorial with vertex at the origin and semi-angle
$\alpha \leq \arctan \|B\|$ and the maximal sectorial relation $H$
corresponding to the form $\st$ is given by
\begin{equation}\label{henill}
H= T^*(I+iB)T,
\end{equation}
 with $\mul H=\mul T^*=(\dom T)^\perp$. A subset of $\dom \st = \dom
T$ is a core of the form $\st$ if and only if it is a core of the
operator $T$. Moreover, the nonnegative selfadjoint relation $H_r$
corresponding to the real part $(\st_H)_{\r}$ of the form $\st$ is
given by
\begin{equation}\label{henilll}
 H_\r=T^*T.
\end{equation}
\end{theorem}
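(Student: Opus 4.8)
The plan is to verify the four assertions in turn: the sectoriality and closedness of $\st$ first, then the identification of the relation, and finally the reading off of the multivalued and real parts.

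First I would compute the real and imaginary parts of $\st$. From \eqref{henil} one has $\st[h] = \|Th\|^2 + i(BTh, Th)$, so that $\st_\r[h] = \|Th\|^2$ and $\st_\I[h] = (BTh, Th)$. Since $B$ is selfadjoint, $\st_\I[h]$ is real, and the Cauchy--Schwarz estimate $|(BTh, Th)| \le \|B\|\,\|Th\|^2$ gives $|\st_\I[h]| \le \|B\|\,\st_\r[h]$, which is exactly \eqref{sec} with $\tan\alpha = \|B\|$; hence $\st$ is sectorial with semi-angle $\alpha \le \arctan\|B\|$. For closedness it suffices to treat $\st_\r$, and here the form norm $\st_\r[h] + \|h\|^2 = \|Th\|^2 + \|h\|^2$ is precisely the graph norm of $T$. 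Since $T$ is closed, $\dom T$ is complete in this norm, so $\st_\r$---and therefore $\st$---is closed. Because the form norm and the graph norm coincide, a subspace of $\dom T$ is dense in one precisely when it is dense in the other, which is the asserted identification of cores.

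The central step is to identify the relation $H$ attached to $\st$ by Theorem~\ref{s-first} with the product $T^*(I+iB)T$. Here I would unwind the definition of the composition of relations together with the definition of the adjoint. A pair $\{h, h'\}$ lies in $T^*(I+iB)T$ precisely when $h \in \dom T$ and $\{(I+iB)Th, h'\} \in T^*$; by the defining property of $T^*$ the latter means $(h', u) = ((I+iB)Th, Tu)$ for every $u \in \dom T$, that is, $(h',u) = \st[h,u]$ for all $u \in \dom T = \dom\st$. Thus $T^*(I+iB)T$ consists of exactly those pairs $\{h,h'\}$ with $h \in \dom\st$ that satisfy the representation identity \eqref{eqn:firstrepresentation}.

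To conclude $H = T^*(I+iB)T$ I would argue by maximality. On the one hand, every element of the maximal sectorial relation $H$ produced by Theorem~\ref{s-first} satisfies \eqref{glij} and \eqref{eqn:firstrepresentation}, so by the previous description $H \subseteq T^*(I+iB)T$. On the other hand, taking $u = h$ in the representation identity shows $(h',h) = \st[h]$ for every $\{h,h'\} \in T^*(I+iB)T$; since $\st$ is sectorial this places $(h',h)$ in the sector, so $T^*(I+iB)T$ is itself sectorial. As $H$ is maximal sectorial and is contained in the sectorial relation $T^*(I+iB)T$, the two must coincide. The multivalued part is then obtained by setting $h = 0$: if $\{0,h'\} \in H$ then $(h',u) = \st[0,u] = 0$ for all $u \in \dom T$, i.e.\ $h' \perp \dom T$, giving $\mul H = (\dom T)^\perp = \mul T^*$. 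Finally, applying the same identification to the real part $\st_\r$---the case $B = 0$---yields the nonnegative selfadjoint relation $T^*T$, so $H_\r = T^*T$. The main obstacle I anticipate is the bookkeeping in the composition/adjoint step: one must keep track of the fact that, for a non-densely-defined closed operator $T$, the adjoint $T^*$ is a genuinely multivalued relation with $\mul T^* = (\dom T)^\perp$, while the intermediate element $(I+iB)Th$ is unambiguous because $I+iB$ is bounded and everywhere defined. Once this is in place, the equivalence with the representation identity is immediate and the maximality argument closes the proof.
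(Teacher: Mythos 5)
Your proposal is correct and follows essentially the same route as the paper: verify that $\st$ is closed and sectorial via $\st_{\r}[h]=\|Th\|^{2}$ and $|\st_{\I}[h]|\le\|B\|\,\st_{\r}[h]$, unwind the composition $T^{*}(I+iB)T$ through the definition of the adjoint so that it consists exactly of the pairs satisfying the representation identity, and conclude $H=T^{*}(I+iB)T$ by maximality of $H$ against the sectorial relation $T^{*}(I+iB)T$. The only differences are cosmetic: the paper additionally proves closedness of $T^{*}(I+iB)T$ directly by a Cauchy-sequence argument (a step your argument makes redundant, since maximal sectorial relations are automatically closed), while you spell out the core identification and the $B=0$ specialization giving $H_{\r}=T^{*}T$, which the paper leaves implicit.
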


\begin{proof}
It is straightforward to check that $\st$ in \eqref{henil}
is a closed sectorial form as indicated, since
\[
 \st_{r}[h,k]=(Th,Tk), \quad \st_{\I}[h,k]=(B Th,Tk).
\]
Therefore, $|\st_{i}[h]|=|(B Th,Th)| \leq \|B\| \|Th\|^{2}=\|B\| \st_{r}[h]$,
so that $\st$ is closed and sectorial with vertex at the origin and
semi-angle $\alpha \leq \arctan \|B\|$.
Moreover, since $T$ is closed, it is clear that $\st_{r}$
and hence $\st$ is closed.

Now let $\{h,h'\} \in T^*(I+iB)T$, then there exists
$\varphi \in \sK$ such that
\[
 \{h,\varphi\} \in T, \quad \{(I+iB)\varphi, h'\} \in T^*,
\]
from which it follows that
\[
 (h',h)=(\varphi, \varphi)+i(B \varphi, \varphi).
\]
Consequently, one sees that
\[
 | \IM (h'h) |=|(B\varphi, \varphi)| \leq \|B\|\,\|\varphi\|^2= \|B\| \,\RE (h',h),
\]
which implies that $T^*(I+iB)T$ is a sectorial relation
with vertex at the origin and semi-angle
$\alpha \leq \arctan \|B\|$. Furthermore, observe that the
above calculation also shows that $\mul T^*(I+iB)T=\mul T^*$.

To see that $T^*(I+iB)T$ is closed, let $\{h_n, h_n'\} \in T^*(I+iB)T$
converge to $\{h,h'\}$. Then there exist $\varphi_n \in \sK$ such that
\[
 \{h_n,\varphi_n\} \in T, \quad \{(I+iB)\varphi_n, h_n'\} \in T^*,
\]
and the identity $\RE(h_n',h_n)=\|\varphi_n\|^2$
shows that $(\varphi_n)$ is a Cauchy sequence
in $\sK$, so that $\varphi_n \to \varphi$ with $\varphi \in \sK$. Thus
\[
  \{h_n,\varphi_n\} \to \{h,\varphi\},  \quad
  \{(I+iB)\varphi_n, h_n'\} \to \{(I+iB)\varphi, h'\}.
\]
Since $T$ and $T^*$ are closed, one concludes
that $ \{h,\varphi\} \in T$ and $\{(I+iB)\varphi, h'\} \in T^*$,
which implies that $\{h,h'\} \in T^*(I+iB)T$.
Hence $T^*(I+iB)T$ is closed.

Now let $H$ be the maximal sectorial relation
corresponding to $\st$ in \eqref{henil}.
Assume that $\{h,h'\} \in H$, then for all $k \in \dom \st=\dom T$
\[
 \st[h,k]=(h',k) \quad  \mbox{or} \quad ( (I+iB)Th,Tk)= (h',k),
\]
which implies that
\[
 \{(I+iB)Th, h'\} \in T^* \quad \mbox{or} \quad \{h,h'\} \in T^*(I+iB)T.
\]
Consequently, it follows that $H \subset T^*(I+iB)T$.
Since $T^*(I+iB)T$ is sectorial and
$H$ is maximal sectorial, it follows that $H=T^*(I+iB)T$.
In particular, one sees that the closed relation $T^*(I+iB)T$
is maximal sectorial.
\end{proof}

With the closed linear operator $T$ from $\sH$ to $\sK$ and the selfadjoint
operator $B \in \bB(\sK)$,  consider the following matrix decomposition of $B$
\begin{equation}\label{Bdec1+}
 B=\begin{pmatrix} B_{aa}& B_{ab}\\ B_{ba}^*& B_{bb}\end{pmatrix}: \,
 \begin{pmatrix} \ker T^* \\ \cran T \end{pmatrix} \to
 \begin{pmatrix} \ker T^* \\ \cran T \end{pmatrix}.
\end{equation}
Then it is clear that
\begin{equation}\label{Bbb}
\st[h,k]=((I+iB)T h ,T k)=((I+iB_{bb})T h ,T k), \quad h,h \in \dom \st=\dom T,
\end{equation}
which shows that only the compression of $B$ to $\cran T$ plays a role in \eqref{henil}.
In applications involving Theorem \ref{s-repr0o}, it is therefore useful to recall
the following corollary.

\begin{corollary}\label{newnew}
Let $T'$ be a closed linear operator from the Hilbert space $\sH$ to a
Hilbert space $\sK'$ and let the operator $B' \in \mathbf{B}(\sK')$ be selfadjoint.
Assume that the form $\st$ in Theorem \ref{s-repr0o}  is also given by
\[
 \st[h,k]=((I+iB')T' h ,T' k),
 \quad h,k \in \dom \st=\dom T'.
\]
Then there is a unitary mapping $U$ from $\cran T$ onto $\cran T'$, such that
\[
 T'=UT, \quad B_{bb}'=UB_{bb}U^*,
\]
where $B_{bb}$ and $B_{bb}'$ stand for the compressions of $B$ and $B'$
to $\cran T$ and $\cran T'$, respectively.
\end{corollary}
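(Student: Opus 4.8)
The plan is to read off everything from the real and imaginary parts of $\st$, which are intrinsic to the form and therefore admit two representations simultaneously. As computed in the proof of Theorem \ref{s-repr0o}, the given data yield
\[
\st_{\r}[h,k]=(Th,Tk),\qquad \st_{\I}[h,k]=(BTh,Tk),\qquad h,k\in\dom T,
\]
and by the same computation applied to the pair $(T',B')$,
\[
\st_{\r}[h,k]=(T'h,T'k),\qquad \st_{\I}[h,k]=(B'T'h,T'k),\qquad h,k\in\dom T'.
\]
Since both pairs represent the \emph{same} form $\st$, the domains coincide, $\dom T=\dom\st=\dom T'$, and the two expressions for $\st_{\r}$ and for $\st_{\I}$ agree pointwise on $\dom\st$. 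This is the structural observation on which everything rests.

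First I would build the unitary $U$ from the real part. The equality $(Th,Tk)=(T'h,T'k)$ for all $h,k\in\dom T$ shows that the assignment $U_{0}\colon Th\mapsto T'h$ is well defined on $\ran T$ (if $Th=0$ then $\|T'h\|^{2}=(T'h,T'h)=(Th,Th)=0$, so $T'h=0$) and isometric, with range exactly $\ran T'$. Because $\ran T$ is dense in $\cran T$ and $\ran T'$ is dense in $\cran T'$, the isometry $U_{0}$ extends by continuity to a unitary $U\colon\cran T\to\cran T'$. By construction $UTh=T'h$ for every $h\in\dom T$, that is, $T'=UT$.

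Next I would recover the relation between the compressions from the imaginary part. Recall that $B_{bb}$ is characterized by $(B_{bb}u,v)=(Bu,v)$ for $u,v\in\cran T$, and similarly $B_{bb}'$ on $\cran T'$. Using $T'=UT$ together with the coincidence $\st_{\I}[h,k]=(BTh,Tk)=(B'T'h,T'k)$, one computes, for all $h,k\in\dom T$,
\[
(B_{bb}Th,Tk)=(BTh,Tk)=(B_{bb}'\,UTh,UTk)=(U^{*}B_{bb}'U\,Th,Tk),
\]
where the middle equality uses that $T'h,T'k\in\ran T'\subset\cran T'$ absorbs the compression, and the last uses that $U$ is unitary. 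Since $Th,Tk$ range over the dense subspace $\ran T$ of $\cran T$, it follows that $B_{bb}=U^{*}B_{bb}'U$, equivalently $B_{bb}'=UB_{bb}U^{*}$, as claimed.

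The only genuinely delicate point is the well-definedness and unitarity of $U$: one must verify that $U_{0}$ is isometric \emph{and} has dense range, so that its continuous extension is surjective and hence unitary, rather than merely an isometry. Once the two decompositions of $\st$ into real and imaginary parts are in hand, every remaining step is a routine density-and-continuity argument, so I expect no serious obstacle beyond the careful setup of $U$.
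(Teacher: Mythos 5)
Your proof is correct and follows essentially the same route as the paper: split the identity $((I+iB')T'h,T'k)=((I+iB)Th,Tk)$ into real and imaginary parts, define $U$ as the continuous extension of the isometry $Th\mapsto T'h$ between the dense subspaces $\ran T$ and $\ran T'$, and then transfer the imaginary part through $U$ to identify the compressions. The paper's proof is just a terser version of the same argument; your extra care with well-definedness, surjectivity, and the density step merely fills in details the paper leaves implicit.
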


\begin{proof}
By assumption $((I+iB')T' h ,T' k)= ((I+iB)T h ,T k)$ for all $h,k \in \dom \st$. This leads to
\[
 (T' h ,T' k)= (T h ,T k)  \quad \mbox{and} \quad (B' T' h ,T' k)= (B T h ,T k)
\]
for all $h,k \in \dom \st$. Hence the mapping $Th \mapsto T'h$ is unitary, and denote it by $U$.
Then $T'=UT$ and it follows that $(B' T' h ,T' k)= (B U^*T' h , U^*T' k)$.
\end{proof}

\section{A matrix decomposition for $T^*(I+iB)T$}\label{sect3}

Let $T$ be a linear relation from $\sH$ to $\sK$ which is closed;
observe that then the subspace $\mul T$ is closed.
The adjoint $T^{*}$ of $T$ is the set of all $\{h,h'\} \in \sK \times \sH$
for which
\[
 (h',f)=(h,f') \quad \mbox{for all} \quad \{f,f'\} \in T.
\]
Hence, the definition of $T^{*}$ depends on the Hilbert spaces
$\sH$ and $\sK$ in which $T$ is assumed to act.
Let $\sK$ have the orthogonal decomposition
\begin{equation}\label{hilbert}
\sK=\cdom T^* \oplus \mul T,
\end{equation}
and let $P$ be the orthogonal projection onto $\cdom T^*$.
Observe that $PT \subset T$,
since $\{0\} \times \mul T \subset T$.
Therefore $T^{*} \subset (PT)^{*}=T^*P$,
where the last equality holds since $P \in \mathbf{B}(\sK)$.
Then one has
\begin{equation}\label{hilbert1}
 (PT)^*=T^* \hoplus\, (\mul T \times \{0\}).
\end{equation}
The \textit{orthogonal operator part} $T_{\rm s}$
of $T$ is defined as $T_{\rm s}=PT$.
Hence $T_\op$ is an operator from
the Hilbert space $\sH$ to the
Hilbert space $\sK$ and $T_\op \subset T$.
Note that
$\ran T_\op \subset \cdom T^*=\sK \ominus \mul T$.
Thus one may interpret $T_\op$ as an operator
from the Hilbert space $\sH$
to the Hilbert space $\cdom T^*$
and one may also consider the adjoint $(T_\op)^\times$
of $T_{\s}$ with respect to these spaces. It is not difficult to see
the connection between these adjoints:  if $\{h,h'\} \in \sK \times \sH$, then
\begin{equation}\label{hilbert17}
 \{h,h'\} \in T^{*} \quad \Leftrightarrow \quad \{h,h'\} \in (T_{\s})^{\times}.
\end{equation}
The identity \eqref{hilbert1} shows the difference between
$(T_{\s})^{*}$ and $(T_\op)^\times$.

Let $T$ be a closed linear relation from a Hilbert space $\sH$ to a
Hilbert space $\sK$ and let $B \in \mathbf{B}(\sK)$ be selfadjoint.
In order to study the linear relation
\[
 T^{*}(I+iB)T,
\]
decompose the Hilbert space $\sK$ as in \eqref{hilbert}
and decompose the selfadjoint operator $B \in \mathbf{B}(\sK)$
accordingly:
\begin{equation}\label{Bdec1}
 B=\begin{pmatrix} B_{11}& B_{12}\\ B_{12}^*& B_{22}\end{pmatrix}: \,
 \begin{pmatrix} \cdom T^* \\ \mul T \end{pmatrix} \to
 \begin{pmatrix} \cdom T^* \\ \mul T \end{pmatrix}.
\end{equation}
Here the operators $B_{11} \in \mathbf{B}(\cdom T^*)$
and $B_{22} \in \mathbf{B}(\mul T)$ are selfadjoint,
while $B_{12} \in \mathbf{B}(\mul T, \cdom T^*)$
and $B_{12}^* \in \mathbf{B}(\cdom T^*,\mul T)$.

By means of the decomposition \eqref{Bdec1} the following
auxiliary operators will be introduced.
First, define the operator $C_0 \in \mathbf{B}(\cdom T^*)$ by
\begin{equation}\label{c0}
C_0=I+B_{12}(I+B_{22}^2)^{-1}B_{12}^*.
\end{equation}
Observe that $C_0 \geq I$ and that $(C_0)^{-1}$
belongs to $\mathbf{B}(\cdom T^*)$
and is a nonnegative operator.
Next, define the operator $C \in \mathbf{B}(\cdom T^*)$  by
\begin{equation}\label{c}
 C=C_0^{-\half}\left[B_{11}-B_{12}(I+B_{22}^2)^{-\half}B_{22}
 (I+B_{22}^2)^{-\half}B_{12}^*\right]C_0^{-\half},
\end{equation}
which is clearly selfadjoint.

\begin{lemma}\label{redux}
Let $T$ be a closed linear relation from a Hilbert space $\sH$ to a
Hilbert space $\sK$,
 let $T_\op$ be the orthogonal operator part of $T$, and
let the selfadjoint operator $B \in \mathbf{B}(\sK)$ be decomposed
as in \eqref{Bdec1}.
Let the operators $C_0$ and $C$ be defined by
\eqref{c0} and \eqref{c}. Then
\begin{equation}\label{redu}
 T^*(I + i B)T = (T_{\s})^{\times}C_0^{1/2}(I+i C) C_0^{1/2}T_{\s},
\end{equation}
and, consequently, $T^*(I + i B)T$ is maximal sectorial and
\begin{equation}\label{redu1}
 \mul T^*(I + i B)T=\mul T^{*}=\mul (T_{\s})^{\times}.
\end{equation}
\end{lemma}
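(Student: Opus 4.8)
The plan is to prove the operator identity \eqref{redu} by a direct computation and then read off the remaining assertions by applying Theorem \ref{s-repr0o} to a suitably reduced operator. The guiding observation is that although an element $\varphi$ with $\{h,\varphi\}\in T$ is determined only up to a summand in $\mul T$, the requirement $\{(I+iB)\varphi,h'\}\in T^{*}$ forces $(I+iB)\varphi\in\dom T^{*}\subseteq\cdom T^{*}$; indeed $\{0,\psi\}\in T$ for $\psi\in\mul T$ gives $(g,\psi)=(h',0)=0$, so $\dom T^{*}\subseteq\cdom T^{*}$. This condition pins down the $\mul T$–component of $\varphi$ and makes the whole relation collapse onto the orthogonal operator part $T_{\s}$.

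For the inclusion $\subseteq$ in \eqref{redu} I would take $\{h,h'\}\in T^{*}(I+iB)T$, pick $\varphi\in\sK$ with $\{h,\varphi\}\in T$ and $\{(I+iB)\varphi,h'\}\in T^{*}$, and write $\varphi=\varphi_{1}+\varphi_{2}$ along $\cdom T^{*}\oplus\mul T$. Since $\{0\}\times\mul T\subset T$, one gets $\{h,\varphi_{1}\}\in PT=T_{\s}$, hence $\varphi_{1}=T_{\s}h$. Because $(I+iB)\varphi\in\cdom T^{*}$, its $\mul T$–component vanishes; by the block form \eqref{Bdec1} this reads $(I+iB_{22})\varphi_{2}=-iB_{12}^{*}\varphi_{1}$, so $\varphi_{2}=-i(I+iB_{22})^{-1}B_{12}^{*}\varphi_{1}$, where $I+iB_{22}$ is boundedly invertible because $B_{22}$ is selfadjoint.

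The hard part is really just careful bookkeeping: compute the $\cdom T^{*}$–component $g$ of $(I+iB)\varphi$ and recognize it. Substituting $\varphi_{2}$ and using $(I+iB_{22})^{-1}=(I+B_{22}^{2})^{-1}(I-iB_{22})$ one arrives at
\[
 g=\big[I+B_{12}(I+B_{22}^{2})^{-1}B_{12}^{*}\big]\varphi_{1}
 +i\big[B_{11}-B_{12}(I+B_{22}^{2})^{-1}B_{22}B_{12}^{*}\big]\varphi_{1}.
\]
By \eqref{c0} and \eqref{c}, and since functions of $B_{22}$ commute, the real bracket is $C_{0}$ and the imaginary bracket is $C_{0}^{1/2}CC_{0}^{1/2}$, whence $g=C_{0}^{1/2}(I+iC)C_{0}^{1/2}T_{\s}h$. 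As $\{g,h'\}\in T^{*}=(T_{\s})^{\times}$ by \eqref{hilbert17}, this is exactly $\{h,h'\}\in(T_{\s})^{\times}C_{0}^{1/2}(I+iC)C_{0}^{1/2}T_{\s}$. The reverse inclusion follows by running the construction backwards: given $h$ in the domain of the right–hand side, set $\varphi_{1}=T_{\s}h$, define $\varphi_{2}$ by the same formula, put $\varphi=\varphi_{1}+\varphi_{2}$, and verify $\{h,\varphi\}\in T$ and $(I+iB)\varphi=g$.

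Finally, for maximal sectoriality and \eqref{redu1} I would put $S:=C_{0}^{1/2}T_{\s}$. Since $C_{0}\ge I$, the operator $C_{0}^{1/2}\in\mathbf{B}(\cdom T^{*})$ is selfadjoint and boundedly invertible, and $T_{\s}$ is a closed operator: if $\{h_{n},\varphi_{1,n}\}\in T_{\s}$ with $h_{n}\to h$ and $\varphi_{1,n}\to\psi$, then $\{h_{n},\varphi_{1,n}\}\in T$, so closedness of $T$ gives $\{h,\psi\}\in T$ with $\psi\in\cdom T^{*}$, i.e.\ $\{h,\psi\}\in T_{\s}$. Hence $S$ is a closed operator from $\sH$ to $\cdom T^{*}$ with $\dom S=\dom T_{\s}$ and $(T_{\s})^{\times}C_{0}^{1/2}=S^{\times}$, so \eqref{redu} reads $T^{*}(I+iB)T=S^{\times}(I+iC)S$. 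Theorem \ref{s-repr0o}, applied to the operator $S$ and the bounded selfadjoint $C$, then gives that this relation is maximal sectorial with $\mul=\mul S^{\times}=(\dom S)^{\perp}=(\dom T_{\s})^{\perp}=\mul(T_{\s})^{\times}=\mul T^{*}$, which is \eqref{redu1}.
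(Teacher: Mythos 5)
Your proof is correct and follows essentially the same route as the paper: the same decomposition $\varphi=\varphi_1+\varphi_2$ along $\sK=\cdom T^*\oplus\mul T$, the same block computation forcing $\varphi_2=-i(I+iB_{22})^{-1}B_{12}^*\varphi_1$ and producing the factorization $C_0^{1/2}(I+iC)C_0^{1/2}$, and the same reduction to Theorem \ref{s-repr0o} via the closed operator $C_0^{1/2}T_{\s}$ and the identity $(C_0^{1/2}T_{\s})^{\times}=(T_{\s})^{\times}C_0^{1/2}$. The only cosmetic differences are that you obtain \eqref{redu1} from $\mul S^{\times}=(\dom S)^{\perp}$ rather than by tracing the equivalences for $\{0,h'\}$, and your phrase ``so $\dom T^*\subseteq\cdom T^*$'' should read ``so $\dom T^*\perp\mul T$,'' which is what your computation $(g,\psi)=0$ actually establishes and what the argument needs.
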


\begin{proof}
 In order to prove the equality
in \eqref{redu}, assume that $\{h,h'\}\in T^* (I + i B) T$. This means that
\begin{equation}\label{eka-}
\{h,\varphi\}\in T \quad \mbox{and} \quad \{(I + i B)\varphi,h'\}\in T^*
\end{equation}
for some $\varphi\in\sK$. Decompose the element $\varphi$ as
\begin{equation}\label{eka--}
\varphi=\varphi_1+\varphi_2, \quad \varphi_{1} \in \cdom T^{*},
\,\,\varphi_2\in\mul T.
\end{equation}
Since $\{0, \varphi_{2}\} \in T$, it is clear that
\begin{equation}\label{eka---}
\{h,\varphi\}\in T \quad \Leftrightarrow \quad  \{h,\varphi_{1}\}\in T_{\s}.
\end{equation}
Using \eqref{eka--} and the above decomposition \eqref{Bdec1} of $B$,
one observes that
\[
 \{(I + i B)\varphi,h'\}
 =\left\{ \begin{pmatrix}   (I+i B_{11})\varphi_1+iB_{12}\varphi_2 \\
                                          iB_{12}^*\varphi_1+(I+i B_{22})\varphi_2
              \end{pmatrix},
   h' \right\},
\]
which implies that the condition $\{(I + i B)\varphi,h'\}\in T^*$ is equivalent to
 \[
 \left\{ \begin{array}{l} \{ (I+i B_{11})\varphi_1+iB_{12}\varphi_2, h' \} \in T^*,
           \\ iB_{12}^*\varphi_1+(I+i B_{22})\varphi_2=0,
           \end{array}
           \right.
\]
or, what is the same thing,
 \begin{equation}\label{eka}
 \left\{ \begin{array}{l} \{ [I+i B_{11} + B_{12}(I+i
  B_{22})^{-1}B_{12}^*]\varphi_1, h'\} \in T^*,  \\
  \varphi_2=-i(I+i B_{22})^{-1}B_{12}^*\varphi_1.
           \end{array}
          \right.
\end{equation}
Due to the definitions \eqref{c0} and \eqref{c} and the identity
\[
(I+i B_{22})^{-1} =
(I+B_{22}^2)^{-\half}(I-iB_{22})(I+B_{22}^2)^{-\half},
\]
observe that
\[
\begin{split}
&I+i B_{11} + B_{12}(I+i B_{22})^{-1}B_{12}^*  \\
&\hspace{1cm}  =C_0 +
 i[B_{11}-B_{12}(I+B_{22}^2)^{-\half}B_{22}(I+B_{22}^2)^{-\half}B_{12}^*]\\
&\hspace{1cm}  = C_0^{1/2}(I+i C) C_0^{1/2}.
\end{split}
\]
Therefore, it follows from \eqref{eka}, via the equivalence in \eqref{hilbert17}, that
\begin{equation}\label{eka+}
\{(I + i B)\varphi, h'\} \in  T^* \quad \Leftrightarrow \quad
 \left\{ \begin{array}{l} \{C_0^{1/2}(I+i C) C_0^{1/2} \varphi_1,h'\} \in (T_{s})^{\times}, \\
  \varphi_2=-i(I+i B_{22})^{-1}B_{12}^*\varphi_1.
           \end{array}
           \right.
\end{equation}
 Combining  \eqref{eka---} and \eqref{eka+}, one sees that
\[
 \{h,h'\} \in (T_{\s})^{\times} C_0^{1/2}(I+i C) C_0^{1/2} T_{\s}.
\]
Conversely, if this inclusion holds, then there exists $\varphi_{1} \in \cdom T^{*}$,
such that
\[
 \{h, \varphi_{1}\} \in T_{\s} \quad \mbox{and}
 \quad \{C_0^{1/2}(I+i C) C_0^{1/2} \varphi_1,h'\} \in (T_{\s})^{\times}.
\]
Then define $\varphi_2=-i(I+i B_{22})^{-1}B_{12}^*\varphi_1$,
so that $\varphi_{2} \in \mul T$. Furthermore, define $\varphi=\varphi_{1}+\varphi_{2}$.
Hence $\{h, \varphi\} \in T$, and it follows from \eqref{eka+}
that
\[
\{h,h'\}\in T^* (I + i B) T.
\]
Therefore one can rewrite $T^* (I + i B) T$ in the form \eqref{redu}.

Observe that $C_{0}^{\half}T_{\s}$ is a closed linear operator
from the Hilbert space $\sH$ to the Hilbert space $\cdom T^{*}$
whose adjoint is given by
\begin{equation}\label{C0Tadjoint}
 ( C_0^{1/2}T_{\rm s})^{\times}=(T_{\rm s})^{\times} \, C_0^{1/2}.
\end{equation}
Hence, by Theorem \ref{s-repr0o}
$(T_{\s})^{\times} C_0^{1/2}(I+i C) C_0^{1/2} T_{\s}$ is a maximal sectorial
relation in $\sH$ and by the identity \eqref{redu} the same is true for $T^*(I+iB)T$.

The statement in \eqref{redu1} follows by tracing the above equivalences
for an element $\{0,h'\}$.
 \end{proof}

\begin{remark}\label{rem3.2}
Let $\varphi=\varphi_1+\varphi_2\in \sK$ be decomposed as in \eqref{eka--}.
Then one has the following equivalence:
\[
 (I+iB)\varphi\in\cdom T^* \quad \Leftrightarrow\quad (I+iB)\varphi=C_0^{1/2}(I+iC)C_0^{1/2}\varphi_1.
\]
To see this, let $\eta =(I+iB)\varphi$. Then $\eta \in \cdom T^*$ if and only if
\[
 \begin{pmatrix}   I+i B_{11} & iB_{12} \\
                                          iB_{12}^* & I+i B_{22}
              \end{pmatrix}
  \begin{pmatrix} \varphi_1 \\ \varphi_2 \end{pmatrix}
  =   \begin{pmatrix} \eta \\ 0 \end{pmatrix},
\]
where $\cdom T^*$ is interpreted as the subspace $\cdom T^*\times \{0\}$ of $\sK$.
Now apply \eqref{eka+}.
\end{remark}

\section{A class of maximal sectorial relations and associated forms}\label{sect4}

The linear relation $T^*(I+iB)T$ is maximal sectorial
for any selfadjoint $B \in \mathbf{B}(\sK)$
and any closed linear relation $T$ from $\sH$ to $\sK$.
Now the corresponding closed sectorial form will be determined.
This gives the appropriate version of Theorem \ref{s-repr0o} in terms of relations.
In fact, the general result is based on a reduction via
Lemma \ref{redux} to Theorem \ref{s-repr0o}.

\begin{theorem}\label{s-repr}
Let $T$ be a closed linear relation from a Hilbert space $\sH$ to a
Hilbert space $\sK$ and
 let the selfadjoint operator $B \in \mathbf{B}(\sK)$ be decomposed
as in \eqref{Bdec1}.
Let the operators $C_0$ and $C$ be defined by \eqref{c0} and \eqref{c}.
 Then the form $\st$ defined by
\begin{equation}\label{henilB}
\st[h,k]=((I+iC) \,C_0^{\half}T_{\s} \,h , C_0^{\half}T_{\s}\, k),
 \quad h,k \in \dom \st= \dom T,
\end{equation}
is closed and sectorial with vertex at the origin and semi-angle
$\gamma \leq \arctan \|C\|$. Moreover,
the maximal sectorial relation $H$ corresponding to the form
$\st$ is given by
\begin{equation}\label{HwithC}
 H = (T_{\s})^{\times}\, C_0^{1/2}(I+i C) C_0^{1/2}\,T_{\s}=T^*(I + i B)T.
\end{equation}
 A subset of $\dom \st = \dom T$ is a core of the form $\st$ if
and only if it is a core of the operator $T_\op$. Moreover, the
nonnegative selfadjoint relation $H_r$ corresponding to the real
part $(\st_H)_r$ of the form $\st$ is given by
\[
H_r=(T_{\s})^{\times}C_0T_{\s}.
\]
\end{theorem}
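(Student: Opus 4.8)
The plan is to reduce the entire statement to the operator case already settled in Theorem~\ref{s-repr0o}, applied to the single auxiliary closed operator $S:=C_0^{\half}T_{\s}$ together with the selfadjoint operator $C\in\mathbf{B}(\cdom T^*)$. I would first record, as was already observed in the proof of Lemma~\ref{redux}, that $C_0^{\half}T_{\s}$ is a closed linear operator from $\sH$ into $\cdom T^*$ whose adjoint (with respect to the pair of spaces $\sH$ and $\cdom T^*$) is given by \eqref{C0Tadjoint}, namely $(C_0^{\half}T_{\s})^{\times}=(T_{\s})^{\times}C_0^{\half}$. Since $C_0\geq I$ is boundedly invertible and $\ran T_{\s}\subset\cdom T^*$, one has $\dom S=\dom T_{\s}=\dom T$, so that the form in \eqref{henilB} is literally the form $\st[h,k]=((I+iC)Sh,Sk)$ on $\dom\st=\dom S$ to which Theorem~\ref{s-repr0o} applies verbatim, with $T$ there replaced by $S$ and $B$ replaced by $C$.

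Theorem~\ref{s-repr0o} then yields at once that $\st$ is closed and sectorial with vertex at the origin and semi-angle $\gamma\leq\arctan\|C\|$, and that the maximal sectorial relation associated with $\st$ is $S^{\times}(I+iC)S$. Using the adjoint identity \eqref{C0Tadjoint} this relation equals
\[
(T_{\s})^{\times}C_0^{\half}(I+iC)C_0^{\half}T_{\s},
\]
which by the reduction formula \eqref{redu} of Lemma~\ref{redux} coincides with $T^*(I+iB)T$; this is exactly \eqref{HwithC}. In the same way the real part of $\st$ is $\st_{\r}[h,k]=(Sh,Sk)=(C_0T_{\s}h,T_{\s}k)$, and the nonnegative selfadjoint relation furnished by Theorem~\ref{s-repr0o} is $S^{\times}S=(T_{\s})^{\times}C_0T_{\s}$, as claimed.

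The only assertion not delivered directly by the substitution is the one about cores, and here a short separate argument is needed. Theorem~\ref{s-repr0o} gives that a subset of $\dom\st=\dom S$ is a core of $\st$ if and only if it is a core of the operator $S=C_0^{\half}T_{\s}$, so it suffices to show that $S$ and $T_{\s}$ have the same cores. This follows from the equivalence of their graph norms: since $I\leq C_0\leq\|C_0\|\,I$, one has
\[
\|T_{\s}h\|^2\leq\|C_0^{\half}T_{\s}h\|^2\leq\|C_0\|\,\|T_{\s}h\|^2,\qquad h\in\dom T,
\]
so a subspace of $\dom T$ is dense in the graph norm of $S$ if and only if it is dense in the graph norm of $T_{\s}$. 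Hence a subset of $\dom T$ is a core of $\st$ precisely when it is a core of $T_{\s}=T_\op$.

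I do not expect a genuine obstacle in the present theorem: the algebraic heart of the matter—the factorization $I+iB_{11}+B_{12}(I+iB_{22})^{-1}B_{12}^*=C_0^{\half}(I+iC)C_0^{\half}$ and the resulting identity \eqref{redu}—has already been carried out in Lemma~\ref{redux}. What remains is essentially transporting Theorem~\ref{s-repr0o} along the substitution $T\mapsto C_0^{\half}T_{\s}$, $B\mapsto C$, together with the elementary graph-norm comparison for the core statement; the most error-prone point is merely to confirm that the form \eqref{henilB} and the relation \eqref{HwithC} match the outputs of Theorem~\ref{s-repr0o} after this substitution.
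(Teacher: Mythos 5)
Your proposal is correct and follows essentially the same route as the paper's own proof: both apply Theorem~\ref{s-repr0o} to the closed operator $C_0^{\half}T_{\s}$ (viewed as mapping into $\cdom T^*$) with $B$ replaced by $C$, use the adjoint identity \eqref{C0Tadjoint} together with Lemma~\ref{redux} to obtain \eqref{HwithC}, and justify the core statement by the bounded invertibility of $C_0$. Your explicit graph-norm comparison $\|T_{\s}h\|^2\leq\|C_0^{\half}T_{\s}h\|^2\leq\|C_0\|\,\|T_{\s}h\|^2$ merely spells out what the paper compresses into the remark that ``the factor $C_0$ is bounded with bounded inverse.''
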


\begin{proof}
Since $C_{0}^{\half}T_{\s}$ is a closed linear operator
from the Hilbert space
$\sH$ to the Hilbert space $\cdom T^{*}$,
Theorem \ref{s-repr0o} (with $\sK$ replaced by $\cdom T^{*}$,
$B$ by $C$, and $T$ by $C_0^{1/2}T_{\rm s}$) shows that the form $\st$
in \eqref{henilB} is closed and sectorial with vertex at the origin and semi-angle
$\gamma \leq \arctan \|C\|$.
Moreover, the maximal sectorial relation associated with the form
$\st$ is given by
\[
 ( C_0^{1/2}T_{\rm s})^{\times}(I+i C) C_0^{1/2}T_{\rm s}
 = (T_{\rm s})^{\times}C_0^{1/2}(I+i C) C_0^{1/2}T_{\rm s},
\]
cf. \eqref{henil}, \eqref{henill}, and \eqref{C0Tadjoint}.
The identities in \eqref{HwithC} are clear from Lemma \ref{redux}.
The assertion concerning the core holds, since
the factor $C_0$ is bounded with bounded inverse. The formula
\eqref{HwithC} shows that
\[
 (\st_H)_r[h,k]=(C_0^{\half}T_\op h ,C_0^{\half}T_\op k),
 \quad h,k \in \dom \st = \dom T,
\]
and hence
$H_{\rm r}=(C_0^{1/2}T_\op)^{\times}C_0^{1/2}T_\op
=(T_\op)^{\times}C_0 T_\op$
(cf.\,\,the discussion above).
\end{proof}

Recall  that if $\{h,h'\}\in T^* (I + i B) T$,
then $\{h,\varphi\}\in T$ and $\{(I + i B)\varphi,h'\}\in T^*$.
The  last inclusion implies the condition
$(I+iB)\varphi \in \dom T^{*}\subset \cdom T^{*}$,
giving rise to $\varphi_2=-i(I+i B_{22})^{-1}B_{12}^*\varphi_1$.
Thus, for instance, when $B=\diag (B_{11}, B_{22})$,
it follows that $\varphi_{2}=0$, so that it is immediately clear that
$\gamma \leq \arctan \|B_{11}\|$, independent of $B_{22}$.
 Note that the following assertions are equivalent:
\begin{enumerate}\def\labelenumi{\rm (\roman{enumi})}
\item  $B=\diag (B_{11}, B_{22})$;
\item  $B_{12}=0$;
\item  $C_0=I$;
\item  $\mul T$ is invariant under $B$,
\end{enumerate}
in which case $C=B_{11}$. Hence,
if $\mul T$ is invariant under $B$, i.e.,
if any of the assertions (i)--(iv) hold, then
Theorem \ref{s-repr} gives the following corollary,
which coincides with \cite[Theorem 5.1]{HSSW17}.
In the case where $\mul T=\{0\}$ the corollary
 reduces to Theorem \ref{s-repr0o}.

\begin{corollary}\label{old}
Let $T$ be a closed linear relation from a Hilbert space $\sH$ to a
Hilbert space $\sK$,
 let $T_\op$ be the orthogonal operator part of $T$, and
let $\mul T$ be invariant under the selfadjoint operator $B \in \mathbf{B}(\sK)$,
so that $B=\diag (B_{11}, B_{22})$.
Then the form $\st$ defined by
\[
\st[h,k]=((I+iB_{11})T_{\s}h, T_{\s}k), \quad h,k \in \dom \st=\dom T,
\]
is closed and sectorial with vertex at the origin and semi-angle
$\gamma \leq \arctan \|B_{11}\|$. Moreover,
the maximal sectorial relation $H$ corresponding to the form
$\st$ is given by
\[
 H=(T_{\s})^{\times} (I+iB_{11}) T_{\s}=T^{*}(I+iB)T.
\]
\end{corollary}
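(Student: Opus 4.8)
The plan is to obtain Corollary \ref{old} as the direct specialization of Theorem \ref{s-repr} to the case $B_{12}=0$, so that the whole argument reduces to simplifying the auxiliary operators $C_0$ and $C$.

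First I would verify the equivalence of the four assertions (i)--(iv), since the corollary is stated under the invariance hypothesis (iv) while its conclusions are to be read off from Theorem \ref{s-repr}, whose formulas involve the blocks of $B$. The equivalences (i) $\Leftrightarrow$ (ii) and (ii) $\Rightarrow$ (iii) are immediate from \eqref{Bdec1} and \eqref{c0}. For (ii) $\Leftrightarrow$ (iv), note that relative to $\sK=\cdom T^*\oplus\mul T$ one has
\[
 B\begin{pmatrix} 0 \\ \varphi_2 \end{pmatrix}
 =\begin{pmatrix} B_{12}\varphi_2 \\ B_{22}\varphi_2 \end{pmatrix},
 \quad \varphi_2 \in \mul T,
\]
so invariance of $\mul T$ under $B$ is equivalent to $B_{12}\varphi_2=0$ for all $\varphi_2\in\mul T$, that is, $B_{12}=0$. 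The only implication needing an argument is (iii) $\Rightarrow$ (ii): from $C_0=I$ one gets $B_{12}(I+B_{22}^2)^{-1}B_{12}^*=0$, and since $(I+B_{22}^2)^{-1}$ is bounded and strictly positive this forces $(I+B_{22}^2)^{-\half}B_{12}^*=0$, hence $B_{12}^*=0$.

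With $B_{12}=0$ in hand, I would substitute directly into the definitions \eqref{c0} and \eqref{c}. The correction term in $C_0$ vanishes, giving $C_0=I$, and then the bracketed expression defining $C$ collapses to $B_{11}$, so that $C=B_{11}$; this is selfadjoint on $\cdom T^*$ as the compression of the selfadjoint operator $B$.

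Feeding $C_0=I$ and $C=B_{11}$ into Theorem \ref{s-repr} then yields every assertion of the corollary simultaneously: the form \eqref{henilB} collapses to $\st[h,k]=((I+iB_{11})T_\s h,T_\s k)$, the semi-angle bound becomes $\gamma\le\arctan\|C\|=\arctan\|B_{11}\|$, and the representing relation \eqref{HwithC} becomes $H=(T_\s)^\times(I+iB_{11})T_\s=T^*(I+iB)T$. There is essentially no obstacle here: the mathematical content lies entirely in Theorem \ref{s-repr}, and the corollary amounts to checking that the invariance hypothesis annihilates the off-diagonal block and then reading off the simplified formulas. The only place demanding mild care is the positivity argument used for (iii) $\Rightarrow$ (ii).
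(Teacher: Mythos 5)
Your proposal is correct and follows exactly the paper's route: the paper also obtains Corollary \ref{old} by noting the equivalence of the assertions (i)--(iv) (invariance of $\mul T$ under $B$ being equivalent to $B_{12}=0$, hence $C_0=I$ and $C=B_{11}$) and then specializing Theorem \ref{s-repr}. Your write-up merely makes explicit the positivity argument for (iii) $\Rightarrow$ (ii) that the paper leaves unstated, which is a harmless (and correct) addition.
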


In the case that $\mul T$ is not invariant under $B$, one has  $C_0\neq I$,
and the formulas are different: for instance,
the real part $(\st_H)_r$ in Theorem \ref{s-repr}
is of the form
\[
(\st_H)_r[h,k]=(C_0^{\half}T_\op h ,C_0^{\half}T_\op k),
 \quad h,k \in \dom \st=\dom T_\op = \dom T.
\]

\begin{example}\label{examp}
Assume that $B_{11} \neq 0$ and
\[
B_{11}=B_{12}(I+B_{22}^2)^{-\half}B_{22}(I+B_{22}^2)^{-\half}B_{12}^*,
\]
so that $C=0$. In this case the maximal sectorial relation $H=T^*(I+iB)T$
in Theorem \ref{s-repr} is selfadjoint, i.e., $H=H_r$ and the associated form
$\st$ is nonnegative.
On the other hand, with such a choice of $B$ the operator part of $T$
determines the maximal sectorial relation
$(T_{\rm s})^*(I + iB)T_{\rm s}$ with semi-angle $\arctan \|B_{11}\|>0$,
while $T^*(I + i B)T$ has semi-angle $\gamma=0$.
\end{example}

\section{Maximal sectorial relations and their representations}

Let $H$ be a maximal sectorial relation in $\sH$ and let the closed
sectorial form $\st_H$ correspond to $H$; cf. Theorem \ref{s-first}.
Since the closed form $\st_H$ is sectorial, one has the inequality
\begin{equation}\label{henillla0}
 |(\st_{H})_{\rm i}[h] | \leq (\tan \alpha) (\st_{H})_{\rm r}[h], \quad h \in \dom \st,
\end{equation}
and in this situation the real part  $(\st_{H})_{\rm r}$ is a closed nonnegative form.
Hence by the first representation theorem there exists a nonnegative selfadjoint
relation $H_{\rm r}$, the so-called \textit{real part} of $H$, such that
$\dom H_{\rm r} \subset \dom (\st_{H})_{\rm r}=\dom \st_{H}$ and
\[
(\st_{H})_{\rm r}[h,k] = ( h', k ), \quad \{ h,h' \} \in H_{\rm r},
\quad k \in \dom (\st_{H})_{\rm r}=\dom \st_{H}.
\]
This real part $H_{\rm r}$, not to be confused with the real part
introduced in \cite{HSnSz09}, will play an important role in
 formulating the second representation theorem below.
First the case where $H$ is a maximal sectorial operator will be considered,
in which case $H$ is automatically densely defined; see \cite{Kato}.

\begin{lemma}\label{s-thirdLemma}
Let $H$ be an maximal sectorial operator in $\sH$, let the closed
sectorial form $\st_H$ correspond to $H$ via Theorem \ref{s-first},
and let $H_{\rm r}$ be the real part of $H$.
Then there exists a unique selfadjoint operator $G \in \mathbf{B}(\sH)$
with $\| G \| = \tan \alpha$, of the form
\begin{equation}\label{Bdec1++}
 G=\begin{pmatrix} 0& 0\\ 0& G_{bb}\end{pmatrix}: \,
 \begin{pmatrix} \ker H_{\rm r} \\ \cran H_{\rm r} \end{pmatrix} \to
 \begin{pmatrix} \ker H_{\rm r} \\ \cran H_{\rm r} \end{pmatrix},
\end{equation}
such that
 \begin{equation}\label{sss--}
 \st_H[h,k]=((I +iG) (H_{r})^{\half} h, (H_{r})^{\half} k),
 \quad
 h,k \in \dom \st_H =\dom H_{r}^{\half}.
\end{equation}
Moreover, the corresponding maximal sectorial operator $H$ is given by
\[
 H=(H_{\rm r})^{\half}(I+iG)(H_{\rm r})^{\half},
 \]
with $\mul H=\mul  H_{\rm r}$.
\end{lemma}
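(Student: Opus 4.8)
The plan is to read off $G$ from the imaginary part of $\st_H$ measured in the inner product generated by the real part, and then to recognize the resulting formula as an instance of Theorem~\ref{s-repr0o}. First I would invoke the nonnegative form theory (the second representation theorem, cf.\,\cite{Kato}): since $(\st_H)_\r$ is a closed nonnegative form with associated nonnegative selfadjoint operator $H_\r$, one has $\dom\st_H=\dom(\st_H)_\r=\dom(H_\r)^{\half}$ and $(\st_H)_\r[h,k]=((H_\r)^{\half}h,(H_\r)^{\half}k)$. Because $H$ is a maximal sectorial \emph{operator} it is densely defined, so $(\st_H)_\r$ is densely defined and $H_\r$ is an operator; in particular $\mul H=\mul H_\r=\{0\}$, which settles the final assertion.

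The central step is the construction of $G$. On $\ran(H_\r)^{\half}$ I would define a Hermitian sesquilinear form by
\[
 b\big((H_\r)^{\half}h,(H_\r)^{\half}k\big):=(\st_H)_\I[h,k].
\]
This is well defined: if $(H_\r)^{\half}(h-h')=0$ then $(\st_H)_\r[h-h']=\|(H_\r)^{\half}(h-h')\|^2=0$, and the Cauchy--Schwarz inequality for the sectorial form $\st_H$ forces $\st_H[h-h',k]=0$, hence $(\st_H)_\I[h-h',k]=0$. The sectoriality bound \eqref{henillla0} gives $|b(u,u)|\le(\tan\alpha)\|u\|^2$ for $u\in\ran(H_\r)^{\half}$, so $b$ is bounded with bound $\tan\alpha$. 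Since $\ran(H_\r)^{\half}$ is dense in $\cran H_\r$, the form $b$ extends to a bounded Hermitian form on $\cran H_\r$ and is therefore represented by a selfadjoint $G_{bb}\in\mathbf{B}(\cran H_\r)$ with $\|G_{bb}\|\le\tan\alpha$. Extending $G_{bb}$ by zero on $\ker H_\r=(\cran H_\r)^\perp$ produces $G\in\mathbf{B}(\sH)$ of the block form \eqref{Bdec1++}. Adding $(\st_H)_\r$ and $i(\st_H)_\I$, and using $(H_\r)^{\half}h\in\cran H_\r$, then yields the representation \eqref{sss--}.

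For the norm equality I would note that, as $h$ ranges over $\dom(H_\r)^{\half}$, the vector $(H_\r)^{\half}h$ ranges over a dense subset of $\cran H_\r$; hence, since the numerical radius of the selfadjoint operator $G_{bb}$ equals its norm,
\[
 \|G\|=\|G_{bb}\|=\sup_{u\neq0}\frac{|(G_{bb}u,u)|}{\|u\|^2}
 =\sup_{h}\frac{|(\st_H)_\I[h]|}{(\st_H)_\r[h]}=\tan\alpha,
\]
the last equality holding because $\alpha$ is the exact semi-angle of $\st_H$. Uniqueness follows from the same density: any two selfadjoint operators of the block form \eqref{Bdec1++} satisfying \eqref{sss--} must agree on the dense set $\ran(H_\r)^{\half}$, hence on $\cran H_\r$, while both vanish on $\ker H_\r$.

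Finally, the operator formula follows by applying Theorem~\ref{s-repr0o} with $\sK$ replaced by $\sH$, $T$ by the closed densely defined selfadjoint operator $(H_\r)^{\half}$, and $B$ by $G$: the form in \eqref{sss--} is exactly \eqref{henil}, so its associated maximal sectorial relation is $((H_\r)^{\half})^{*}(I+iG)(H_\r)^{\half}=(H_\r)^{\half}(I+iG)(H_\r)^{\half}$. By the uniqueness in the first representation theorem (Theorem~\ref{s-first}) this relation coincides with $H$. The step I expect to be most delicate is the construction of $G$, specifically the well-definedness of $b$ across $\ker(H_\r)^{\half}$ and the insistence on the zero block on $\ker H_\r$, since it is precisely this normalization that makes $G$ unique and yields $\|G\|=\tan\alpha$.
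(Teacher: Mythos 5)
Your proof is correct and takes essentially the same route as the paper: there, too, $G$ is obtained by representing the imaginary part $(\st_H)_{\rm i}$ as a bounded selfadjoint operator in the variables $(H_{\rm r})^{\half}h$ on $\sH \ominus \ker H_{\rm r}$, extended trivially by zero, after which the formula for $H$ is read off from the earlier representation result (the paper invokes Corollary \ref{old}, you invoke Theorem \ref{s-repr0o} together with uniqueness in Theorem \ref{s-first}, which amounts to the same thing). If anything, your write-up supplies details that the paper's terse proof leaves implicit, namely the well-definedness of the form $b$ across $\ker (H_{\rm r})^{\half}$, the exact norm equality $\|G\|=\tan\alpha$, and the uniqueness of $G$.
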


\begin{proof}
The inequality
\[
 | (\st_{H})_{\rm i}[h,k] |^{2} \leq C \st_{\rm r}[h] \st_{\rm r}[k]
 =C\|H_{\rm r}^{\half} h\| \|H_{\rm r}^{\half} k\|, \quad h,k \in \dom,
\]
shows the existence of a selfadjoint operator $G$ in
$\sH \ominus \ker H$ such that
\begin{equation}\label{henilllaa}
 (\st_{H})_{\rm i}[h,k]=(G (H_{\rm r})^{\half} h, (H_{\rm r})^{\half} k),
 \quad h, k \in \dom (H_{\rm r})^{\half}.
\end{equation}
Extend $G$ to all of $\sH$ in a trivial way, so that the same formula remains valid;
see Corollary \ref{newnew}.
It follows from the decomposition $\st=\st_{\r} +i \st_{\I}$, cf. \eqref{secc},
and the identities \eqref{henillla} and  \eqref{henilllaa},  that
\[
\st_{H}= (\st_{H})_{\rm r}+i (\st_{H})_{\rm i},
\]
so that
\[
\st_{H}=[h,k]= ( (H_{r})_{\s}^{\half}h, (H_{r})_{\s}^{\half} k)
+i ( G(H_{r})_{\s}^{\half}h, (H_{r})_{\s}^{\half} k).
\]
This last identity immediately gives  \eqref{sss--}.
The rest follows from Corollary \ref{old}.
\end{proof}

Now let $H$ be a maximal sectorial relation, let $H_{\r}$ be its real part,
and let $(H_{r})_{\s}$ be its orthogonal operator part.
 Then one obtains the representation
\begin{equation}\label{henillla}
 (\st_{H})_{\rm r}[h,k]=( ((H_{r}))_{\s}^{\half}h, ((H_{r})_{\s})^{\half} k), \quad
 h,k \in \dom (\st_{H})_{\rm r}=\dom ((H_{r})_{\s})^{\half},
\end{equation}
cf. Theorem \ref{s-repr0o}. Now apply  Corollary \ref{old}
and  therefore one may formulate the second representation theorem as follows.

\begin{theorem}\label{s-third}
Let $H$ be a maximal sectorial relation in $\sH$, let the closed
sectorial form $\st_H$ correspond to $H$ via Theorem \ref{s-first},
and let $H_{\rm r}$ be the real part of $H$.
Then there  exists a selfadjoint operator $G \in \mathbf{B}(\sH)$
with $\| G \| = \tan \alpha$, such that $G$ is trivial on
$\ker H_{\rm r} \oplus \mul H_{\rm r}$, and
 \begin{equation}\label{sss-}
 \st_H[h,k]=((I +iG) ((H_{\rm r})_{\rm s})^{\half} h, ((H_{\rm r})_{\rm s})^{\half} k),
 \quad
 h,k \in \dom \st_H =\dom H_{\rm r}^{\half}.
\end{equation}
Moreover, the maximal sectorial relation $H$ is given by
\begin{equation}\label{sss-+}
 H=(((H_{\rm r})_{s})^{\half})^{\times}(I+iG)((H_{\rm r})_{s})^{\half},
 \end{equation}
with $\mul H=\mul H_{\rm r}$.
\end{theorem}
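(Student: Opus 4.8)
The plan is to construct $G$ out of the imaginary part of $\st_H$ and then recognize the resulting factorization as an instance of Corollary~\ref{old}, exactly in the spirit of the operator case in Lemma~\ref{s-thirdLemma}. The starting point is the real part representation \eqref{henillla}, which is already available: $(\st_H)_{\rm r}[h,k]=(((H_{\rm r})_{\s})^{\half}h,((H_{\rm r})_{\s})^{\half}k)$ on $\dom(\st_H)_{\rm r}=\dom((H_{\rm r})_{\s})^{\half}=\dom\st_H$, so that $(\st_H)_{\rm r}[h]=\|((H_{\rm r})_{\s})^{\half}h\|^2$. Here $((H_{\rm r})_{\s})^{\half}$ is a closed operator with range in $\cdom H_{\rm r}$ and kernel $\ker H_{\rm r}$, whence $\cran((H_{\rm r})_{\s})^{\half}=\cdom H_{\rm r}\ominus\ker H_{\rm r}$ and its orthocomplement in $\sH$ is precisely $\ker H_{\rm r}\oplus\mul H_{\rm r}$.

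First I would produce $G$. The symmetric form $(\st_H)_{\rm i}$ satisfies the sectorial bound \eqref{henillla0}; polarizing and applying the Cauchy--Schwarz inequality for the nonnegative form $(\st_H)_{\rm r}$ gives
\[
|(\st_H)_{\rm i}[h,k]|\le(\tan\alpha)\,((\st_H)_{\rm r}[h])^{\half}((\st_H)_{\rm r}[k])^{\half}
=(\tan\alpha)\,\|((H_{\rm r})_{\s})^{\half}h\|\,\|((H_{\rm r})_{\s})^{\half}k\|.
\]
In particular $(\st_H)_{\rm i}[h,k]$ depends only on the vectors $((H_{\rm r})_{\s})^{\half}h$ and $((H_{\rm r})_{\s})^{\half}k$, so it descends to a bounded symmetric sesquilinear form on the dense range of $((H_{\rm r})_{\s})^{\half}$ in $\cran((H_{\rm r})_{\s})^{\half}$. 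By Riesz representation there is a unique bounded selfadjoint representing operator there, of norm $\tan\alpha$ since $\alpha$ is the exact semi-angle of $H$; extending it by $0$ yields $G\in\mathbf{B}(\sH)$ with $\|G\|=\tan\alpha$, trivial on $\ker H_{\rm r}\oplus\mul H_{\rm r}$ and satisfying $(\st_H)_{\rm i}[h,k]=(G((H_{\rm r})_{\s})^{\half}h,((H_{\rm r})_{\s})^{\half}k)$ (cf.\ the trivial extension used in Corollary~\ref{newnew}). Adding the two parts via $\st_H=(\st_H)_{\rm r}+i(\st_H)_{\rm i}$, cf.\ \eqref{secc}, then gives the representation \eqref{sss-}.

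It remains to identify the associated relation. Here I would apply Corollary~\ref{old} to the closed nonnegative selfadjoint relation $T=H_{\rm r}^{\half}$, whose orthogonal operator part is $T_{\s}=((H_{\rm r})_{\s})^{\half}$ and whose multivalued part is $\mul T=\mul H_{\rm r}$, together with $B=G$. Since $G$ is trivial on $\mul H_{\rm r}$, the subspace $\mul T=\mul H_{\rm r}$ is invariant under $G$, so Corollary~\ref{old} applies and identifies the maximal sectorial relation attached to \eqref{sss-} as $(((H_{\rm r})_{\s})^{\half})^{\times}(I+iG)((H_{\rm r})_{\s})^{\half}$, with multivalued part $\mul T^{*}=(\dom H_{\rm r}^{\half})^{\perp}=\mul H_{\rm r}$. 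By the uniqueness in the first representation theorem (Theorem~\ref{s-first}) this relation coincides with $H$, which yields \eqref{sss-+} and $\mul H=\mul H_{\rm r}$.

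The one delicate point is the construction of $G$: one must check that the symmetric form genuinely descends to $\ran((H_{\rm r})_{\s})^{\half}$ and is well defined there (the factor $((H_{\rm r})_{\s})^{\half}h=0$ forces $(\st_H)_{\rm i}[h,k]=0$ by the displayed bound), that its representing operator has norm \emph{exactly} $\tan\alpha$ (which is what pins down $\alpha$ as the true semi-angle of $H$), and that the trivial extension is supported off $\ker H_{\rm r}\oplus\mul H_{\rm r}$. Keeping the two adjoints $(\cdot)^{*}$ and $(\cdot)^{\times}$ distinct, as in \eqref{hilbert17}, is what guarantees that the final product lands in $\sH$ with $\mul H=\mul H_{\rm r}$.
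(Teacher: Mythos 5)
Your proposal is correct and takes essentially the same route as the paper: it uses the real part representation \eqref{henillla}, constructs $G$ by a Riesz-representation argument for the imaginary part of the form supported on $\cran ((H_{\rm r})_{\s})^{\half}$ (exactly the mechanism in the proof of Lemma \ref{s-thirdLemma}), and then identifies $H$ through Corollary \ref{old} together with the uniqueness in Theorem \ref{s-first}. This matches the paper's own derivation of Theorem \ref{s-third}.
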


Next, it is assumed that $H$ is a maximal sectorial relation of the form $H=T^{*}(I+iB)T$,
where $T$ is a closed linear relation from a Hilbert space $\sH$ to a
Hilbert space $\sK$ and the operator $B \in \mathbf{B}(\sK)$ is selfadjoint.
Let the operators $C_0$ and $C$ be defined by \eqref{c0} and \eqref{c}, then
 \[
 H= (T_{\s})^{\times}\, C_0^{1/2}(I+i C) C_0^{1/2}\,T_{\s},
\]
while the corresponding closed sectorial form is given
\[
\st[h,k]=((I+iC) \,C_0^{\half}T_{\s} \,h , C_0^{\half}T_{\s}\, k),
 \quad h,k \in \dom \st= \dom T.
\]
To compare these expressions with \eqref{sss-} and \eqref{sss-+},
observe that
\[
 (\,C_0^{\half}T_{\s} \,h , C_0^{\half}T_{\s}\, k)
 =((H_{r})_{\rm s})^{\half} h, ((H_{r})_{\rm s})^{\half} k)
\]
and
\[
 (C \,C_0^{\half}T_{\s} \,h , C_0^{\half}T_{\s}\, k)
 =(G ((H_{r})_{\rm s})^{\half} h, ((H_{r})_{\rm s})^{\half} k).
\]
It is clear from \eqref{henilB} that only the (selfadjoint) compression
of $C$ to $\cran C_0^{1/2}T_{\rm s}$
contributes to the form \eqref{HwithC},
so that it is straightforward to set up a unitary mapping; cf. Corollary \ref{newnew}.

\section{Extremal maximal sectorial extensions of sums of maximal sectorial relations}

Let $H_{1}$ and $H_{2}$ be maximal sectorial relations in a Hilbert space $\sH$.
Then the sum $H_{1}+H_{2}$ is a sectorial relation  in $\sH$ with
\[
\dom (H_{1}+H_{2})=\dom H_{1} \cap \dom H_{2},
\]
so that the sum is not necessarily densely defined. In particular,
$H_1+H_2$ and its closure need not be operators, since
\begin{equation}
\label{s-sum-mulll}
 \mul (H_{1}+H_{2})=\mul H_{1} + \mul H_{2}.
\end{equation}
To describe the class of extremal maximal sectorial extensions of $H_1+H_2$
some basic notations are recalled from \cite{HSS19}, together with
the description of the Friedrichs and  Kre\u{\i}n extensions
\[
(H_1+H_2)_F \quad \mbox{and} \quad (H_1+H_2)_K
\]
of $H_1+H_2$, respectively.
In order to describe the whole class of extremal extensions of $H_1+H_2$
and the corresponding closed forms a proper description
of the closed sectorial form $\st_K$ is essential.
The results in Sections \ref{sect3} and \ref{sect4} allow a general treatment
that will relax the additional conditions   in \cite{HSS19}.

\subsection{Basic notions}
Let $H_{1}$ and $H_{2}$ be maximal sectorial relations and
decompose them as follows
\begin{equation}\label{H12}
 H_{j} = A_{j}^{\half} (I+iB_{j}) A_{j}^{\half},
\quad 1 \le j \le 2,
\end{equation}
where $A_{j}$ (the real part of $H_{j}$), $1 \le j \le 2,$ are
nonnegative selfadjoint relations in $\sH$ and $B_{j}$, $1 \le j \le
2,$ are  bounded selfadjoint operators in $\sH$ which are
trivial on $\ker A_j \oplus \mul A_j$; cf. Theorem \ref{s-third}.
Furthermore, if $A_{1}$ and $A_{2}$ are decomposed as
\[
  A_{j}=A_{j\s} \oplus A_{j \infty}, \quad 1 \le j \le 2,
\]
where $A_{j \infty} =\{0\} \times \mul A_{j}$,
$1 \le j \le 2$, and $A_{j\s}$, $1 \le j \le 2$, are densely
defined nonnegative selfadjoint operators
(defined as orthogonal complements in the graph sense),
then the uniquely determined square roots of $A_{j}$, $1 \le j \le 2$
are given by
\[
   A_{j}^\half=A_{j\s}^\half \oplus A_{j \infty},
   \quad 1 \le j \le 2.
\]
Associated with $H_{1}$ and $H_{2}$ is the relation
$\Phi$ from $\sH \times \sH$ to $\sH $, defined by
\begin{equation}
\label{s-s-Einz}
 \Phi= \left\{\, \left\{ \{f_{1},f_{2}\}, f_{1}' + f_{2}' \right\}
 :\, \{f_{j},f_{j}' \} \in A_{j}^{\half} , \,
 1 \le j \le 2\,\right\}.
\end{equation}
Clearly, $\Phi$ is a relation whose domain and multivalued part
are given by
\[
 \dom \Phi = \dom A_{1}^{\half} \times \dom A_{2}^{\half},
 \quad \mul \Phi=\mul H_{1} +\mul H_{2}.
\]
The relation $\Phi$ is not necessarily densely defined in $\sH
\times \sH$, so that in general $\Phi^*$ is a relation  as $\mul
\Phi^*=(\dom \Phi)^\perp$.
Furthermore, the adjoint $\Phi^*$ of $\Phi$
is the relation from $\sH$ to $\sH \times \sH$, given by
\begin{equation}
\label{s-s-Zwei}
\Phi^* = \left\{ \left\{ h, \{ h_{1}', h_{2}'\} \right\} \, :
\{h, h_{j}'\} \in A_{j}^{\half}, \, 1 \le j \le 2 \right\}.
\end{equation}
The identity \eqref{s-s-Zwei} shows that the (orthogonal)
operator part
$(\Phi^*)_\s$ of $\Phi^*$ is given by:
\begin{eqnarray}
\label{s-s-qus}
 (\Phi^*)_\s & = &\left\{ \left\{ h, \{ h_{1}', h_{2}'\} \right\} \,
 : \{h, h_{j}'\} \in A_{j\s}^{\half}, \, 1 \le j \le 2  \right\}
 \\
& = & \left\{ \left\{ h, \{ A_{1\s}^{\half} h, A_{2\s}^{\half} h\} \right\}
\, : h \in \dom A_{1}^{\half} \cap \dom A_{2}^{\half} \right\}.
\nonumber
\end{eqnarray}
The identities \eqref{s-s-Zwei} and \eqref{s-s-qus} show that
\[
\dom  \Phi^{*}  = \dom A_{1}^{\half} \cap \dom A_{2}^{\half},
\, \, \mul \Phi^*=\mul H_{1} \times \mul H_{2},
\, \, \ran (\Phi^{*})_\s  = \sF_{0},
\]
where the subspace $\sF_{0} \subset \sH \times \sH$ is defined by
\begin{equation}\label{F0}
\sF_{0} = \left\{ \left\{ A_{1\s}^{\half} h , A_{2\s}^{\half} h
\right\} \, : \, h \in \dom A_{1}^{\half} \cap \dom A_{2}^{\half}
\right\}.
\end{equation}
The closure of $\sF_{0}$ in $\sH \times \sH$ will be denoted by
$\sF$. Define the relation $\Psi$ from $\sH$ to $\sH \times \sH$
by
\begin{equation}
\label{s-s-qu}
 \Psi=\left\{\, \left\{h, \left\{A_{1s}^{\half} h, A_{2s}^{\half}
 h\right\}\right\} :\, h \in \dom H_{1} \cap \dom H_{2} \,\right\} \subset
 \sH \times (\sH \times \sH).
\end{equation}
It follows from this definition that
\[
 \dom \Psi=\dom H_{1} \cap \dom H_{2}, \quad
  \mul \Psi=\{0\}, \quad
 \ran \Psi=\sE_0,
\]
where the space $\sE_0 \subset \sH \times \sH$ is defined by
\begin{equation}\label{E0}
 \sE_0  =  \left\{\, \left\{A_{1\s}^{\half}f , A_{2\s}^{\half}f\right\}:\,
        f\in \dom H_{1} \cap \dom H_{2} \,\right\}.
\end{equation}
Observe that $\sE_0 \subset \sF_0$.
The closure of $\sE_{0}$ in $\sH \times \sH$ will be denoted by
$\sE$. Hence,
\begin{equation}
\label{s-s-ef}
          \sE \subset \sF.
\end{equation}
Comparison of \eqref{s-s-qus} and \eqref{s-s-qu} shows
\begin{equation}\label{Psiclos}
\Psi \subset (\Phi^*)_\s,
\end{equation}
and thus the operator $\Psi$ is closable and $\Psi^{**} \subset (\Phi^*)_\s$.
 It follows from
$\cdom \Psi^*=(\mul \Psi^{**})^\perp$ and
$\mul \Psi^*=(\dom \Psi)^\perp$, that
\[
  \cdom \Psi^*=\sH , \quad \mul \Psi^* =(\dom H_{1} \cap \dom H_{2})^\perp.
\]
Next, define the relation $K $ from $\sH \times \sH$ to $\sH$ by
\begin{eqnarray}\label{s-s-opK}
K & = &
 \big\{ \{\{
(I+iB_{1})A_{1\s}^{\half} f, (I+iB_{2})A_{2\s}^{\half}f \}, f_{1}' + f_{2}' \}\, :
 \\
&&
\quad \quad \quad \{ (I+iB_{1})A_{1\s}^{\half} f, f_{1}' \} \in A_{1}^{\half},
\{ (I+iB_{2})A_{2\s}^{\half} f, f_{2}' \} \in A_{2}^{\half} \big\}
\nonumber \\
&&
\hspace{-0.6cm} \subset \hspace{0.2cm} (\sH \times \sH) \times \sH. \nonumber
\end{eqnarray}
Clearly, the domain and multivalued part of $K$ are
given by
\[
 \dom K= \sD_0, \quad \mul K=\mul (H_{1} + H_{2}),
\]
where
\begin{equation}\label{domK}
   \sD_{0}= \left\{\,\{(I+iB_{1}) A_{1\s}^{1/2}f,
   (I+iB_{2}) A_{2\s}^{1/2}f\} :
   \,  f\in \dom H_{1} \cap \dom H_{2}
  \,\right\} .
\end{equation}
The closure of $\sD_{0}$ in $\sH \times \sH$ will be denoted by
$\sD$.

\begin{lemma}\label{lem3.1}
The relations $K$, $\Phi$, and $\Psi$ satisfy the following
inclusions:
\begin{equation}
\label{s-s-trits}
  K \subset \Phi \subset \Psi^*, \quad \Psi \subset \Phi^* \subset
K^*.
\end{equation}
\end{lemma}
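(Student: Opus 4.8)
The plan is to prove the first chain $K \subset \Phi \subset \Psi^*$ directly from the defining formulas \eqref{s-s-Einz}, \eqref{s-s-qu}, and \eqref{s-s-opK}, and then to obtain the second chain $\Psi \subset \Phi^* \subset K^*$ for free by taking adjoints, using that adjunction reverses inclusions.

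For the inclusion $K \subset \Phi$ I would take a generic element of $K$ as in \eqref{s-s-opK}, namely $\{\{(I+iB_{1})A_{1\s}^{\half}f, (I+iB_{2})A_{2\s}^{\half}f\}, f_{1}'+f_{2}'\}$ with $\{(I+iB_{j})A_{j\s}^{\half}f, f_{j}'\} \in A_{j}^{\half}$ for $j=1,2$, and simply set $g_{j}=(I+iB_{j})A_{j\s}^{\half}f$ and $g_{j}'=f_{j}'$. Then $\{g_{j},g_{j}'\} \in A_{j}^{\half}$ and the element has the form $\{\{g_{1},g_{2}\}, g_{1}'+g_{2}'\}$, which is exactly the shape \eqref{s-s-Einz} defining $\Phi$; so this inclusion is immediate, the point being only that the structured elements of $K$ are special cases of the more general elements of $\Phi$. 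For $\Phi \subset \Psi^*$ I would first record, from \eqref{s-s-qu} and the definition of the adjoint of a relation, that $\Psi^*$ is the relation from $\sH \times \sH$ to $\sH$ consisting of all $\{\{k_{1},k_{2}\}, \eta'\}$ for which $(\eta', h)=(k_{1}, A_{1\s}^{\half}h)+(k_{2}, A_{2\s}^{\half}h)$ for every $h \in \dom H_{1}\cap\dom H_{2}$. Now take a generic element $\{\{g_{1},g_{2}\}, g_{1}'+g_{2}'\} \in \Phi$ with $\{g_{j},g_{j}'\} \in A_{j}^{\half}$. Since each $A_{j}^{\half}$ is a nonnegative selfadjoint relation, $A_{j}^{\half}=(A_{j}^{\half})^*$, and for $h \in \dom H_{1}\cap\dom H_{2} \subset \dom A_{j}^{\half}=\dom A_{j\s}^{\half}$ one has $\{h, A_{j\s}^{\half}h\} \in A_{j\s}^{\half} \subset A_{j}^{\half}$. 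Pairing $\{g_{j},g_{j}'\} \in (A_{j}^{\half})^*$ against $\{h, A_{j\s}^{\half}h\} \in A_{j}^{\half}$ gives $(g_{j}', h)=(g_{j}, A_{j\s}^{\half}h)$, and summing over $j=1,2$ yields precisely the membership condition for $\Psi^*$.

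Finally, taking adjoints in the established chain $K \subset \Phi \subset \Psi^*$ and using that $A \subset B$ implies $B^* \subset A^*$, I obtain $\Phi^* \subset K^*$ from $K \subset \Phi$, and $\Psi^{**} \subset \Phi^*$ from $\Phi \subset \Psi^*$. Since $\Psi$ is closable with $\Psi \subset \Psi^{**}$ (as already noted around \eqref{Psiclos}), the latter gives $\Psi \subset \Phi^*$, which completes the second chain. I expect the only delicate points to be the correct identification of the adjoint $\Psi^*$ as a relation from $\sH \times \sH$ to $\sH$, and the verification that $\dom H_{1}\cap\dom H_{2}$ lies in $\dom A_{j\s}^{\half}$ so that $\{h, A_{j\s}^{\half}h\}$ genuinely belongs to $A_{j}^{\half}$; once these are in place, every inclusion reduces to a formal manipulation with the defining formulas and the selfadjointness of $A_{j}^{\half}$.
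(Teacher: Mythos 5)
Your proof is correct, and its skeleton is the same as the paper's: verify two of the four inclusions directly from the definitions and obtain the remaining two by taking adjoints, using that $A \subset B$ implies $B^* \subset A^*$. The only real difference is which member of the dual pair $\Psi \subset \Phi^*$, $\Phi \subset \Psi^*$ you attack directly. The paper proves $\Psi \subset \Phi^*$ at once, by reading membership off the already-stated formula \eqref{s-s-Zwei} for $\Phi^*$ (an element of $\Psi$ has the form $\{h, \{A_{1\s}^{\half}h, A_{2\s}^{\half}h\}\}$ with $\{h, A_{j\s}^{\half}h\} \in A_j^{\half}$), and then deduces $\Phi \subset \Phi^{**} \subset \Psi^*$ by duality; you instead prove $\Phi \subset \Psi^*$ directly, computing $\Psi^*$ from the definition of the adjoint and pairing $\{g_j, g_j'\} \in A_j^{\half} = (A_j^{\half})^*$ against $\{h, A_{j\s}^{\half}h\} \in A_j^{\half}$, and then deduce $\Psi \subset \Psi^{**} \subset \Phi^*$. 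The two routes are equivalent in substance: the selfadjointness of $A_j^{\half}$ that you invoke explicitly is precisely what underlies the formula \eqref{s-s-Zwei} that the paper quotes, so your version is marginally longer but self-contained, while the paper's is a one-liner conditional on \eqref{s-s-Zwei}. One small remark: you do not need closability of $\Psi$ for the last step, since the inclusion $\Psi \subset \Psi^{**}$ holds for an arbitrary linear relation; closability (which the paper obtains from \eqref{Psiclos}) only adds that $\Psi^{**}$ is again an operator, which plays no role here.
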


\begin{proof}
To see this note that $K \subset \Phi$ follows from \eqref{s-s-Einz}
and \eqref{s-s-opK}, and that $\Psi \subset \Phi^*$ follows from
\eqref{s-s-Zwei} and \eqref{s-s-qu}. Therefore, also
$\Phi^* \subset K^*$ and $\Phi \subset \Phi^{**} \subset \Psi^*$.
\end{proof}

\subsection{The Friedrichs and the Kre\u{\i}n extensions of
$H_1+H_2$}\label{sec3.2}

The descriptions of the Friedrichs extension and
the Kre\u{\i}n extension $(H_{1}+H_{2})_F$ and $(H_{1}+H_{2})_K$
of $H_1+H_2$ are now recalled from \cite{HSS19}.
For this, define the orthogonal sum of the operators
$B_{1}$ and $B_{2}$ in $\sH \times \sH$ by
\[
 B_\oplus:=B_1 \oplus B_2 =
\begin{pmatrix}
B_{1} & 0 \\
0 & B_{2}
\end{pmatrix}.
\]
The descriptions of $(H_{1}+H_{2})_F$ and $(H_{1}+H_{2})_K$
incorporate the initial data on the factorizations \eqref{H12} of $H_1$ and $H_2$
via the mappings $\Phi$, $\Psi$, and $K$ in Subsection \ref{s-sum-mulll}.
The construction of the Friedrichs extension was given in \cite[Theorem~3.2]{HSS19},
where some further details and a proof of the following result can be found.
The new additions in the next theorem are the second representations for
$(H_{1}+H_{2})_F$ and $\st_{F}$ that will be needed in the rest of this paper.

\begin{theorem} \label{ss-twee}
Let $H_{1}$ and $H_{2}$ be maximal sectorial and let $\Psi$ be defined by \eqref{s-s-qu}.
Then the Friedrichs extension of $H_{1}+H_{2}$ has the expression
\begin{equation}\label{HF2}
 (H_{1}+H_{2})_F=\Psi^* (I + i B_\oplus) \Psi^{**}=\Psi^* C_0^{1/2}(I+iC)C_0^{1/2} P_\sD (\Psi^{**})_{\s}.
\end{equation}
The closed sectorial form $\st_{F}$  associated with $(H_{1}+H_{2})_F$  is given by
\begin{equation}\label{tF2}
\st_{F} [f,g]= ((I + iB_\oplus ) \Psi^{**} f, \Psi^{**} g)
 = (C_0^{1/2}(I+iC)C_0^{1/2} P_\sD (\Psi^{**})_{\s} f,P_\sD (\Psi^{**})_{\s} g),
 \end{equation}
for all $f, \, g  \in \dom \st_{F} = \dom \Psi^{**}$.
\end{theorem}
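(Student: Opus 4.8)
The first equalities in \eqref{HF2} and \eqref{tF2} are precisely \cite[Theorem~3.2]{HSS19}, so I would take them as given and concentrate on the two \emph{second} equalities, which repackage the Friedrichs data through the auxiliary operators $C_0$, $C$ and the projection $P_\sD$. The guiding idea is that $(H_{1}+H_{2})_F=\Psi^*(I+iB_\oplus)\Psi^{**}$ is an instance of the relation $T^*(I+iB)T$ analysed in Sections \ref{sect3}--\ref{sect4}, with the closed relation $T=\Psi^{**}$ from $\sH$ to $\sH\times\sH$ and the bounded selfadjoint operator $B=B_\oplus$ on $\sH\times\sH$. Since $(\Psi^{**})^*=\Psi^*$, the outer factors already match, and the whole task is to compute the reduction furnished by Lemma \ref{redux} and Theorem \ref{s-repr} in this concrete situation.

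First I would assemble the structural facts about $\Psi$ recorded before Lemma \ref{lem3.1}: the operator $\Psi$ is closable, its closure $\Psi^{**}$ is again an operator with $\cran\Psi^{**}=\sE$, and $(\Psi^{**})^*=\Psi^*$. Because the form $\st_{F}[f,g]=((I+iB_\oplus)\Psi^{**}f,\Psi^{**}g)$ pairs elements of $\cran\Psi^{**}=\sE$, Corollary \ref{newnew} shows that only the compression of $B_\oplus$ to $\sE$ is relevant, which already reduces $(H_{1}+H_{2})_F$ to the operator case of Theorem \ref{s-repr0o} for the closed operator $\Psi^{**}\colon\sH\to\sE$.

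To reach the stated shape I would then run the block reduction of Section \ref{sect3} on $B_\oplus$, taking $\sD=\overline{(I+iB_\oplus)\sE_{0}}$ as the subspace into which $(I+iB_\oplus)$ carries the range $\sE$; note that $\sD_{0}=(I+iB_\oplus)\sE_{0}$ holds directly from \eqref{E0} and \eqref{domK}. Feeding the corresponding block decomposition of $B_\oplus$ into the definitions \eqref{c0} and \eqref{c} produces $C_0$ and $C$, and the equivalence in Remark \ref{rem3.2} rewrites $(I+iB_\oplus)$ acting on $\ran\Psi^{**}$ in the reduced form $C_0^{1/2}(I+iC)C_0^{1/2}P_\sD$. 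Substituting this into $\st_{F}$ on $\dom\st_{F}=\dom\Psi^{**}$ gives the second equality in \eqref{tF2}; combining it with the adjoint identity \eqref{C0Tadjoint} and the identification of $\Psi^*$ with $((\Psi^{**})_\s)^\times$ followed by the ambient projection gives the second equality in \eqref{HF2}. The multivalued part is obtained by tracing $\{0,h'\}$ through these equivalences as in \eqref{redu1}.

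The step I expect to be the main obstacle is the precise matching of the two subspaces $\sE$ and $\sD$: I must verify that the operators $C_0$, $C$ and the projection $P_\sD$ built from the block structure of $B_\oplus$ genuinely reproduce $(I+iB_\oplus)$ once it is sandwiched between $\Psi^*$ on the left and $\Psi^{**}$ on the right, so that the refined expression and $\Psi^*(I+iB_\oplus)\Psi^{**}$ define the \emph{same} maximal sectorial relation and the \emph{same} closed form. The two levers for this verification are Corollary \ref{newnew} (only the compression of $B_\oplus$ to the relevant range subspace contributes) and Remark \ref{rem3.2} (the characterization of when $(I+iB_\oplus)\varphi$ lands in that subspace). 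A final, lighter point to check is that passing to the closure $\Psi^{**}$ does not enlarge the multivalued part, which is what guarantees $\dom\st_{F}=\dom\Psi^{**}$ and hence the agreement of domains on both sides of \eqref{tF2}.
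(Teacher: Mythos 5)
Your proposal is correct and, in its operative third paragraph, is essentially the paper's own proof: take the first equalities from \cite[Theorem~3.2]{HSS19}, observe from \eqref{E0} and \eqref{domK} that $\sD_0=(I+iB_\oplus)\sE_0$ and hence $\sD=(I+iB_\oplus)\sE$, so that $(I+iB_\oplus)$ maps $\ran \Psi^{**}\subset\sE$ into $\sD=\cdom K$, and then use Remark \ref{rem3.2} to rewrite $(I+iB_\oplus)(\Psi^{**})_{\s}f$ as $C_0^{1/2}(I+iC)C_0^{1/2}P_\sD(\Psi^{**})_{\s}f$, after which both second equalities follow by substitution. One caution: your opening framing---that the task is to ``compute the reduction furnished by Lemma \ref{redux} and Theorem \ref{s-repr}'' for $T=\Psi^{**}$---would, taken literally, yield nothing, since $\mul \Psi^{**}=\{0\}$ makes that reduction trivial ($C_0=I$, $C=B_\oplus$); the operators $C_0$, $C$ and the projection $P_\sD$ in the statement are those of \eqref{c0plus}--\eqref{cplus}, built on the decomposition $\sH\times\sH=\cdom K \oplus \mul K^*$ coming from the Kre\u{\i}n-extension analysis, which is exactly the decomposition your third paragraph in fact uses.
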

\begin{proof}
As indicated the first expressions for $(H_{1}+H_{2})_F$ in \eqref{HF2} and $\st_F$ in \eqref{tF2}
have been proved in \cite[Theorem~3.2]{HSS19} and, hence,
it suffices to derive the second
expressions in \eqref{HF2} and \eqref{tF2}.

By definition, one has  $\ran \Psi=\sE_0$ (see \eqref{s-s-qu}, \eqref{E0}),
and by Lemma \ref{lem3.1} one has $\Psi\subset \Psi^{**}\subset K^*$, which
after projection onto $\sD=\cdom K$ yields
\[
 P_\sD \Psi^{**}\subset P_\sD K^*=(K^*)_\s.
\]
Notice that $\sD_0=\dom K=(I+i B_\oplus)\sE_0$ (see \eqref{E0}, \eqref{domK}).
Since the operator $I+iB_\oplus$ is bounded with bounded inverse,
one has the equality
\begin{equation}\label{sDsE}
  \sD=(I+i B_\oplus)\sE.
\end{equation}
It follows that the range of $(I+i B_\oplus)\Psi^{**}$
belongs to $\sD=\cdom K$.
Now by Remark \ref{rem3.2} this implies
that for all $f\in \dom \Psi^{**}$ one has the equality
\begin{equation}\label{neweq00}
 (I+B_\oplus)(\Psi^{**})_{\s}f = C_0^{1/2}(I+iC)C_0^{1/2} P_\sD (\Psi^{**})_{\s}f.
 \end{equation}
This leads to
\[
 \Psi^* (I + i B_\oplus) \Psi^{**} = \Psi^* C_0^{1/2}(I+iC)C_0^{1/2} P_\sD (\Psi^{**})_{\s},
\]
which proves \eqref{HF2}. Similarly by substituting \eqref{neweq00} into the first formula
for $\st_F$ and noting that $P_\sD C_0^{1/2}=P_\sD$, one obtains the second formula in \eqref{tF2}.
\end{proof}

Also the construction of the Kre\u{\i}n extension for the sum $H_1+H_2$
can be found in \cite[Theorem~3.2]{HSS19}.
However, the corresponding form $\st_K$ was described
only under additional conditions to prevent the difficulty that appears by the fact
that the multivalued part of $(H_1+H_2)_K$ is in general not invariant under
the mapping $B_\oplus$. Theorem \ref{s-repr} allows a removal of these additional
conditions and leads to a description of the form $\st_K$ in the general situation.

For this purpose, decompose the Hilbert space $\sH\times \sH$ as follows
\begin{equation}\label{hilbertn}
\sH\times\sH=\cdom K \oplus \mul K^*,
\end{equation}
and let $P$ be the orthogonal projection onto $\cdom K$.
Moreover, decompose the selfadjoint operator $B_\oplus \in \mathbf{B}(\sH\times\sH)$
accordingly:
\begin{equation}\label{Bplusdec1}
 B_\oplus=\begin{pmatrix} B_{11}& B_{12}\\ B_{12}^*& B_{22}\end{pmatrix}: \,
 \begin{pmatrix} \cdom K \\ \mul K^* \end{pmatrix} \to
 \begin{pmatrix} \cdom K \\ \mul K^* \end{pmatrix}.
\end{equation}
Next define the operator $C_0 \in \mathbf{B}(\cdom K^*)$ by
\begin{equation}\label{c0plus}
C_0=I+B_{12}(I+B_{22}^2)^{-1}B_{12}^*,
\end{equation}
and the operator $C \in \mathbf{B}(\cdom K^*)$  by
\begin{equation}\label{cplus}
 C=C_0^{-\half}\left[B_{11}-B_{12}(I+B_{22}^2)^{-\half}B_{22}
 (I+B_{22}^2)^{-\half}B_{12}^*\right]C_0^{-\half},
\end{equation}
which is clearly selfadjoint.

\begin{theorem}\label{KVNext}
Let $H_{1}$ and $H_{2}$ be maximal sectorial relations in a Hilbert space $\sH$,
let $K$ be defined by \eqref{s-s-opK}, and let $C_0$ and $C$ be
given by \eqref{c0plus} and \eqref{cplus}, respectively.
Then the Kre\u{\i}n extension of $H_{1} + H_{2}$ has the expression
\[
 (H_{1} + H_{2})_K=K^{**}(I + i B_\oplus)K^{*}
 = ((K^*)_{\s})^{\times}\, C_0^{1/2}(I+i C) C_0^{1/2}\,(K^*)_{\s}.
\]
The closed sectorial form $\st_K$ associated with $(H_{1} + H_{2})_K$ is given by
\[
  \st_{K} [f,g] = ((I+i C) C_0^{1/2} (K^{*})_{\s} f,C_0^{1/2} (K^{*})_{\s} g),
   \quad f, \, g  \in \dom \st_{K} = \dom K^{*}.
\]
\end{theorem}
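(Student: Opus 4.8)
The plan is to obtain Theorem \ref{KVNext} by specializing the general machinery of Sections \ref{sect3} and \ref{sect4} to the relation $K$, in complete analogy with the way the second expressions for the Friedrichs extension were derived in Theorem \ref{ss-twee}. The starting point, available from \cite[Theorem~3.2]{HSS19}, is the identification $(H_{1}+H_{2})_K = K^{**}(I+iB_\oplus)K^*$; what remains is to rewrite this relation and to read off the associated closed form by means of Theorem \ref{s-repr}. The crucial observation is that $K^{**}(I+iB_\oplus)K^*$ has exactly the shape $T^*(I+iB)T$ studied in Theorem \ref{s-repr}, once one puts $T = K^*$ and $B = B_\oplus$.

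First I would verify that the data match. As an adjoint, $K^*$ is automatically closed, so $T = K^*$ is a closed linear relation from $\sH$ to $\sH \times \sH$, and $T^* = K^{**}$ gives $T^*(I+iB)T = K^{**}(I+iB_\oplus)K^*$. The orthogonal decomposition $\sK = \cdom T^* \oplus \mul T$ required by Theorem \ref{s-repr} reads here $\sH \times \sH = \cdom K^{**} \oplus \mul K^*$; since passing to the closure does not change the closed domain, $\cdom K^{**} = \cdom K$, so this is precisely the decomposition \eqref{hilbertn}, and the induced block decomposition of $B_\oplus$ is \eqref{Bplusdec1}. Hence the auxiliary operators produced from $B_\oplus$ by \eqref{c0} and \eqref{c} are exactly the $C_0$ and $C$ of \eqref{c0plus} and \eqref{cplus}, while the orthogonal operator part $T_{\s} = P K^*$ equals $(K^*)_{\s}$.

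With these identifications Theorem \ref{s-repr} applies directly. It shows that $K^{**}(I+iB_\oplus)K^*$ is maximal sectorial and equals $((K^*)_{\s})^{\times} C_0^{1/2}(I+iC)C_0^{1/2}(K^*)_{\s}$, which is the second expression claimed for $(H_{1}+H_{2})_K$, and it identifies the closed sectorial form corresponding to this relation as $\st[f,g] = ((I+iC)C_0^{1/2}(K^*)_{\s}f, C_0^{1/2}(K^*)_{\s}g)$ with $\dom \st = \dom K^*$. Because the underlying relation is $(H_{1}+H_{2})_K$, the uniqueness part of Theorem \ref{s-first} forces this form to be $\st_K$, which finishes the proof.

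I expect no genuine obstacle, as the substantive work is already contained in Theorem \ref{s-repr} and in \cite{HSS19}; the only point demanding care is the bookkeeping of the adjoint directions together with the identity $\cdom K^{**} = \cdom K$, which is what makes the abstract decomposition $\cdom T^* \oplus \mul T$ coincide with the concrete \eqref{hilbertn}. This is precisely the mechanism that dispenses with the invariance hypothesis used in \cite{HSS19}: the subspace $\mul T = \mul K^*$ need not be invariant under $B_\oplus$, yet Theorem \ref{s-repr} still delivers the form $\st_K$ in closed form.
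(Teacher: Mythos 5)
Your proposal is correct and follows essentially the same route as the paper: the first equality is quoted from \cite[Theorem~3.2]{HSS19}, and the second equality together with the form $\st_K$ is obtained by applying Theorem \ref{s-repr} with $T=K^*$ and $B=B_\oplus$ (the paper phrases the form step as an application of Theorem \ref{s-repr0o} to the closed operator $C_0^{1/2}(K^*)_{\s}$, but that is exactly the content of Theorem \ref{s-repr}). Your explicit check that $\cdom K^{**}=\cdom K$, so that the abstract decomposition $\cdom T^*\oplus\mul T$ coincides with \eqref{hilbertn}, is a detail the paper leaves implicit, and it is handled correctly.
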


\begin{proof}
The first equality in the first statement is proved in \cite[Theorem~3.2]{HSS19}.
The second equality is obtained by applying Theorem \ref{s-repr} to the sectorial relation
$K^{**}(I + i B_\oplus)K^{*}$.

The statement concerning the form $\st_{K}$ is a consequence of
this second representation of $(H_{1} + H_{2})_K$, since $C_0^{1/2} (K^{*})_{s}$ is a closed operator
and hence one can apply Theorem \ref{s-repr0o} to get the desired expression for the corresponding form $\st_K$.
\end{proof}

 The form $\st_K$ described in Theorem \ref{KVNext} can be used to give a complete description of all \textit{extremal maximal sectorial extensions} of the sum $H_1+H_2$.
Namely, a maximal sectorial extension $\wt H$ of a sectorial relation $S$ is extremal precisely when the corresponding closed sectorial form
$\st_{\wt H}$ is a restriction of the closed sectorial form $\st_K$ generated by the Kre\u{\i}n extension $S_K$ of $S$;
see e.g. \cite[Definition 7.7, Theorems~8.2,~8.4,~8.5]{HSSW17}.
Therefore, Theorem \ref{KVNext} implies the following description of all extremal maximal sectorial extensions of $H_1+H_2$.

\begin{theorem}\label{s-sum-caracter}
Let $H_{1}$ and $H_{2}$ be
maximal sectorial relations in $\sH$, let $\Psi$ and $K$ be defined by \eqref{s-s-qu} and
\eqref{s-s-opK}, respectively, and let $P_\sD$ be the orthogonal projection from $\sH\times\sH$ onto $\sD=\cdom K$.
Then the following statements are equivalent:
\begin{enumerate}
\def\labelenumi{\rm (\roman{enumi})}

\item $\widetilde{H}$ is an extremal maximal sectorial extension of $H_{1}+H_{2}$;

\item $\widetilde{H} = R^{\ast} (I+i C ) R$, where $R$ is a closed linear operator satisfying
\[
  C_0^{1/2} P_\sD \Psi^{**} \subset   R \subset C_0^{1/2}(K^*)_\s.
\]
\end{enumerate}
\end{theorem}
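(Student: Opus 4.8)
The plan is to read off both implications from the representation of the Kre\u{\i}n form $\st_K$ obtained in Theorem \ref{KVNext}, together with the form-theoretic description of extremality. Recall from \cite[Definition 7.7, Theorems 8.2, 8.4, 8.5]{HSSW17} that a maximal sectorial extension $\wt H$ of $H_1+H_2$ is extremal precisely when its closed form $\st_{\wt H}$ is a restriction of $\st_K$. By Theorem \ref{KVNext},
\[
 \st_K[f,g]=((I+iC)\,C_0^{1/2}(K^*)_\s f,\,C_0^{1/2}(K^*)_\s g),\qquad f,g\in\dom K^*,
\]
so a restriction of $\st_K$ is exactly the form obtained by restricting $f,g$ to a subspace of $\dom K^*$; the task is thus to turn such a restriction into the single closed operator $R$ and to pin down the admissible domains. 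Two normalizations will be used throughout. First, $\Psi\subset\Phi^*\subset K^*$ from Lemma \ref{lem3.1} gives $\Psi^{**}\subset K^*$, and projecting onto $\sD=\cdom K$ yields $(K^*)_\s\uphar\dom\Psi^{**}=P_\sD\Psi^{**}$, hence $C_0^{1/2}(K^*)_\s\uphar\dom\Psi^{**}=C_0^{1/2}P_\sD\Psi^{**}$. Second, comparing this with Theorem \ref{ss-twee} shows $\st_F=\st_K\uphar\dom\Psi^{**}$, so the Friedrichs extension is itself extremal and $\dom\Psi^{**}$ is the smallest form domain occurring among the extensions of $H_1+H_2$.

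For (ii)$\Rightarrow$(i), I would start from a closed operator $R$ with $C_0^{1/2}P_\sD\Psi^{**}\subset R\subset C_0^{1/2}(K^*)_\s$ and apply Theorem \ref{s-repr0o} with $T$ replaced by $R$, $B$ by $C$, and $\sK$ by $\cdom K$: this gives that $\wt H=R^*(I+iC)R$ is maximal sectorial with closed form $\st_{\wt H}[f,g]=((I+iC)Rf,Rg)$ on $\dom R$. The inclusion $R\subset C_0^{1/2}(K^*)_\s$ means $Rf=C_0^{1/2}(K^*)_\s f$ on $\dom R$, so $\st_{\wt H}=\st_K\uphar\dom R$ is a restriction of $\st_K$. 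To invoke the characterization of \cite{HSSW17} I must still check that $\wt H$ \emph{extends} $H_1+H_2$; this follows since $\dom R\supset\dom\Psi^{**}\supset\dom(H_1+H_2)$ and, because $H_1+H_2\subset(H_1+H_2)_K$, the first representation theorem applied to the Kre\u{\i}n extension gives $\st_K[h,k]=(h',k)$ for all $\{h,h'\}\in H_1+H_2$ and $k\in\dom K^*$; restricting $k$ to $\dom R=\dom\st_{\wt H}$ and applying Theorem \ref{s-first} to $\wt H$ yields $\{h,h'\}\in\wt H$. Hence $\wt H$ is an extremal maximal sectorial extension of $H_1+H_2$.

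For (i)$\Rightarrow$(ii), let $\wt H$ be extremal. By \cite{HSSW17} its form is $\st_{\wt H}=\st_K\uphar\cD$ with $\cD=\dom\st_{\wt H}\subset\dom K^*$, while $\wt H\supset H_1+H_2$ forces $\cD\supset\dom\Psi^{**}$, since the Friedrichs domain is the smallest form domain among the extensions (cf. \cite{HSSW17}). I would then set $R:=C_0^{1/2}(K^*)_\s\uphar\cD$. The two normalizations above give at once $C_0^{1/2}P_\sD\Psi^{**}\subset R\subset C_0^{1/2}(K^*)_\s$ and $\st_{\wt H}[f,g]=((I+iC)Rf,Rg)$, so Theorem \ref{s-repr0o} identifies $\wt H=R^*(I+iC)R$. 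It remains only to see that $R$ is a closed operator, which is the content of the final paragraph.

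The step I expect to be the main obstacle is the equivalence between the form-level condition ``$\cD=\dom\st_{\wt H}$ is closed in the form norm of $\st_K$'' and the operator-level condition ``$R=C_0^{1/2}(K^*)_\s\uphar\cD$ is a closed operator''. The real part of $\st_K$ is $f\mapsto\|C_0^{1/2}(K^*)_\s f\|^2$, so the norm in which closed restrictions of $\st_K$ are to be taken is precisely the graph norm of the closed operator $C_0^{1/2}(K^*)_\s$; consequently a subspace $\cD\subset\dom K^*$ is form-closed exactly when $C_0^{1/2}(K^*)_\s\uphar\cD$ is closed, which is the desired statement $\dom R=\cD$. Here it is essential that the factor $C_0$ is bounded with bounded inverse, so that $C_0^{1/2}$ is a boundedly invertible selfadjoint factor and $C_0^{1/2}(K^*)_\s$ and $(K^*)_\s$ have the same closed restrictions and the same cores; thus no closedness is created or destroyed by inserting $C_0^{1/2}$. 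Once this equivalence is established, the assignments $\wt H\mapsto R$ and $R\mapsto R^*(I+iC)R$ are mutually inverse bijections between the extremal extensions of $H_1+H_2$ and the closed operators $R$ in the stated interval, which is exactly the equivalence of (i) and (ii).
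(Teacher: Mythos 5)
Your proposal is correct, but it takes a genuinely different route from the paper's own proof. The paper proves the theorem by a transfer argument: it invokes the abstract model of \cite{HSSW17} for $S=H_1+H_2$ (a Hilbert space $\sH_S$, a selfadjoint $B_S\in\mathbf{B}(\sH_S)$, an operator $Q$ and a relation $J$ with $J\subset Q^*$, $Q\subset J^*$, realizing $S_F$ and $S_K$), quotes the abstract interval characterization of extremal extensions in \cite[Theorem 8.4]{HSSW17} (closed operators $R$ with $Q^{**}\subset R\subset J^*$), and then uses Corollary \ref{newnew} to identify, up to unitary equivalence, the abstract data $(Q^{**},J^*,B_S)$ with the concrete data $\bigl(C_0^{1/2}P_\sD\Psi^{**},\,C_0^{1/2}(K^*)_\s,\,C\bigr)$ supplied by Theorems \ref{ss-twee} and \ref{KVNext}. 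You instead stay entirely in the concrete setting and use from \cite{HSSW17} only the form criterion for extremality ($\st_{\wt H}$ is a restriction of $\st_K$): Theorem \ref{KVNext} gives the formula for $\st_K$, Theorem \ref{s-repr0o} together with the uniqueness in Theorem \ref{s-first} converts closed restrictions of $\st_K$ into relations $R^{*}(I+iC)R$ and back, the identity $\st_F=\st_K\uphar\dom\Psi^{**}$ (read off from Theorem \ref{ss-twee}) pins down the lower endpoint of the interval, and your observation that closedness of the restricted form is precisely graph-closedness of $R=C_0^{1/2}(K^*)_\s\uphar\cD$ (because the real part of $\st_K$ is $f\mapsto\|C_0^{1/2}(K^*)_\s f\|^2$) settles the only delicate point. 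In effect you re-prove the abstract theorem of \cite{HSSW17} in this concrete situation: your argument is longer but self-contained and makes the bijection $\wt H\leftrightarrow R$ explicit, whereas the paper's proof is shorter and exhibits the statement as an instance of the general result, at the cost of unwinding the abstract construction and the unitary identification. One step you cite rather than prove deserves to be made explicit, namely $\dom\Psi^{**}\subset\dom\st_{\wt H}$ (minimality of the Friedrichs form domain); it does hold for every maximal sectorial extension $\wt H$ of $H_1+H_2$, since by Theorem \ref{s-first} the form $\st_{\wt H}$ restricted to $\dom(H_1+H_2)$ equals $(h',k)$ for $\{h,h'\}\in H_1+H_2$, and $\st_{\wt H}$, being closed, contains the closure of this restriction, which is exactly $\st_F$ with domain $\dom\Psi^{**}$ by Theorem \ref{ss-twee}; with that inserted, your proof is complete.
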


\begin{proof}
 For comparison with the abstract results this statement will be proved by means of
the constructions used in \cite{HSSW17}.
Let $S=H_1+H_2$ then the sectorial relation $S$ gives rise to a Hilbert space $\sH_S$
and a selfadjoint operator $B_S \in \mathbf{B}(\sH_S)$ such that the Friedrichs extension $S_F$ and
the Kre\u{\i}n extension $S_K$ of $S$ are given by
\[
S_F=Q^*(I+iB_S)Q^{**}, \quad t_F=J^{**}(I+iB_S)J^*,
\]
with corresponding forms
\[
 \st_F[f,g]=((I+iB_S)Q^{**} f, Q^{**}g), \quad f,g \in \dom Q^{**},
\]
and
\[
\st_K[f,g]=((I+iB_S)J^{*} f, J^{*}g), \quad f,g \in \dom J^{*};
\]
see \cite[Theorem~8.3]{HSSW17}.
Here $Q: \sH \to \sH_S$ is an operator and $J: \sH_S \to \sH$ is a densely defined linear relation such that
\[
 J \subset Q^*, \quad Q \subset J^*;
\]
in particular, the adjoint $J^*$ is an operator.

Recall from Theorem \ref{KVNext} that
\[
  \st_{K} [f,g] = ((I+i C) C_0^{1/2} (K^{*})_{\s} f,C_0^{1/2} (K^{*})_{\s} g),
\]
while Theorem \ref{ss-twee} gives
\[
\st_{F} [f,g]= ((I + iB_\oplus ) \Psi^{**} f, \Psi^{**} g)
 = (C_0^{1/2}(I+iC)C_0^{1/2} P_\sD (\Psi^{**})_{s} f,P_\sD (\Psi^{**})_{s} g).
\]
Now apply \cite[Theorem 8.4]{HSSW17} and Corollary \ref{newnew}.
 \end{proof}

\subsection{The form sum construction}\label{sec3.3}

The maximal sectorial relations $H_{1}$ and $H_{2}$ generate the
following closed sectorial form
\begin{equation}
\label{s-s-fs}
 ((I+iB_{1})A_{1\s}^{\half} h, A_{1\s}^{\half} k)
 +((I+iB_{2})A_{2\s}^{\half} h, A_{2\s}^{\half} k),
 \quad h,k \in \dom A_{1}^{\half} \cap \dom A_{2}^{\half}.
\end{equation}
Observe that the restriction of this form to $\dom \Psi^{**}$ is
equal to
\begin{equation}
\label{s-sum-form}
 (\Psi^{**}h, \Psi^{**}k)
 =
 ((I+iB_{1})A_{1\s}^{\half} h, A_{1\s}^{\half} k)
 +((I+iB_{2})A_{2\s}^{\half} h, A_{2\s}^{\half} k),
 \quad h,k \in \dom \Psi^{**},
\end{equation}
since $\Psi^{**} \subset (\Phi^*)_\s$, cf. \eqref{s-s-qus}.
Thus, the form in \eqref{s-s-fs} has a natural domain
which is in general larger than $\dom \Psi^{**}$.

\begin{theorem}
\label{s-sum-een} Let $H_{1}$ and $H_{2}$
be maximal sectorial relations in $\sH$,
let $\Phi$ be given by \eqref{s-s-Einz},
and let $\sE=\clos \sE_0$ and $\sF=\clos \sF_0$
be defined by \eqref{E0} amd \eqref{F0}.
Then the maximal sectorial relation
\[
 \Phi^{**} (I+iB_\oplus ) \Phi^*
\]
is an extension of the relation $H_{1}+H_{2}$, which corresponds to
the closed sectorial form in \eqref{s-s-fs}.

Moreover, the following statements are equivalent:
\begin{enumerate}
\def\labelenumi{\rm (\roman{enumi})}

\item $\Phi^{**} (I+iB_\oplus ) \Phi^*$ is extremal;

\item $\sE=\sF$.
\end{enumerate}
\end{theorem}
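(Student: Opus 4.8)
For the first assertion the plan is to apply the results of Section~\ref{sect4} to the closed relation $T=\Phi^{*}$ from $\sH$ to $\sH\times\sH$ and the selfadjoint operator $B_\oplus\in\mathbf{B}(\sH\times\sH)$, so that $T^{*}(I+iB_\oplus)T=\Phi^{**}(I+iB_\oplus)\Phi^{*}$. The decisive point is that $\mul\Phi^{*}=\mul H_{1}\times\mul H_{2}=\mul A_{1}\times\mul A_{2}$ is invariant under $B_\oplus=B_{1}\oplus B_{2}$, since each $B_{j}$ is trivial on $\ker A_{j}\oplus\mul A_{j}$; hence Corollary~\ref{old} applies and shows at once that $\Phi^{**}(I+iB_\oplus)\Phi^{*}$ is maximal sectorial with associated form $(h,k)\mapsto((I+iB_\oplus)(\Phi^{*})_\s h,(\Phi^{*})_\s k)$ on $\dom\Phi^{*}=\dom A_{1}^{\half}\cap\dom A_{2}^{\half}$. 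As $(\Phi^{*})_\s h=\{A_{1\s}^{\half}h,A_{2\s}^{\half}h\}$ by \eqref{s-s-qus}, this is precisely the form \eqref{s-s-fs}. That the relation extends $H_{1}+H_{2}$ I would verify directly: if $\{h,h'\}\in H_{1}+H_{2}$ with $h'=h_{1}'+h_{2}'$ and $\{h,h_{j}'\}\in H_{j}=A_{j}^{\half}(I+iB_{j})A_{j}^{\half}$, choose $\psi_{j}$ with $\{h,\psi_{j}\}\in A_{j}^{\half}$ and $\{(I+iB_{j})\psi_{j},h_{j}'\}\in A_{j}^{\half}$; then $\{h,\{\psi_{1},\psi_{2}\}\}\in\Phi^{*}$ and $\{\{(I+iB_{1})\psi_{1},(I+iB_{2})\psi_{2}\},h'\}\in\Phi$, whence $\{h,h'\}\in\Phi^{**}(I+iB_\oplus)\Phi^{*}$.

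For the equivalence I would use that $\Phi^{**}(I+iB_\oplus)\Phi^{*}$ is extremal precisely when its form $\st_\Phi$, just identified, is a restriction of the Kre\u{\i}n form $\st_{K}$ of Theorem~\ref{KVNext}. Since $\Phi^{*}\subset K^{*}$ by Lemma~\ref{lem3.1}, one has $\dom\Phi^{*}\subset\dom K^{*}$, and because $\{h,(\Phi^{*})_\s h\}\in\Phi^{*}\subset K^{*}$ the operator parts satisfy $(K^{*})_\s h=P_\sD(\Phi^{*})_\s h$ for $h\in\dom\Phi^{*}$. Writing $\xi=(\Phi^{*})_\s h$, $\zeta=(\Phi^{*})_\s k$ and $\xi_{1}=P_\sD\xi$, this gives $\st_{K}[h,k]=(C_{0}^{\half}(I+iC)C_{0}^{\half}\xi_{1},P_\sD\zeta)$, while $\st_\Phi[h,k]=((I+iB_\oplus)\xi,\zeta)$. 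The bridge is Remark~\ref{rem3.2} in the $K$-decomposition \eqref{Bplusdec1}: setting $\tilde\xi:=\xi_{1}-i(I+iB_{22})^{-1}B_{12}^{*}\xi_{1}$ one has $\tilde\xi\in\sE$ (that is $(I+iB_\oplus)\tilde\xi\in\cdom K$, by \eqref{sDsE}) and $(I+iB_\oplus)\tilde\xi=C_{0}^{\half}(I+iC)C_{0}^{\half}\xi_{1}$. As $(I+iB_\oplus)\tilde\xi\in\cdom K$ is orthogonal to $\mul K^{*}$, this yields $\st_{K}[h,k]=((I+iB_\oplus)\tilde\xi,\zeta)$, and therefore
\[
 \st_\Phi[h,k]-\st_{K}[h,k]=((I+iB_\oplus)(\xi-\tilde\xi),\zeta),
 \qquad \xi=(\Phi^{*})_\s h,\ \zeta=(\Phi^{*})_\s k .
\]

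It then remains to show that this difference vanishes for all $h,k\in\dom\Phi^{*}$ if and only if $\sE=\sF$. If $\sE=\sF$, then every $\xi\in\sF_{0}=\ran(\Phi^{*})_\s\subset\sF=\sE$ already satisfies the defining constraint of $\sE$, so $\xi=\tilde\xi$ and the difference is $0$; thus $\st_\Phi$ is a restriction of $\st_{K}$ and the form sum extension is extremal. The converse is the step I expect to be the crux. Assuming the difference vanishes identically and letting $\zeta$ run through the dense set $\sF_{0}$ of $\sF$ gives $(I+iB_\oplus)(\xi-\tilde\xi)\perp\sF$ for every $\xi\in\sF_{0}$. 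The key observation is that $v:=\xi-\tilde\xi$ itself lies in $\sF$, since $\xi\in\sF_{0}\subset\sF$ and $\tilde\xi\in\sE\subset\sF$ by \eqref{s-s-ef}; hence $((I+iB_\oplus)v,v)=0$, and taking real parts, together with $\RE((I+iB_\oplus)v,v)=\|v\|^{2}$ (as $B_\oplus$ is selfadjoint), forces $v=0$. Thus $\xi=\tilde\xi\in\sE$ for all $\xi\in\sF_{0}$, so $\sF_{0}\subset\sE$ and $\sF=\clos\sF_{0}\subset\sE$; combined with $\sE\subset\sF$ this gives $\sE=\sF$. The main obstacle throughout is keeping the two matrix decompositions apart — the $B_\oplus$-invariant one on $\cdom\Phi^{**}\oplus\mul\Phi^{*}$ used for the form $\st_\Phi$, and the genuinely coupled one \eqref{Bplusdec1} on $\cdom K\oplus\mul K^{*}$ entering $\st_{K}$ — and the clean resolution is the reduction of the form difference to $((I+iB_\oplus)v,v)$ with $v\in\sF$.
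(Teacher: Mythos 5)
Your proof is correct, and it reaches the conclusion by a recognizably different organization than the paper's, so a comparison is in order. For the first assertion the paper simply cites \cite[Theorem 3.5]{HSS19}, whereas you give a self-contained derivation from Corollary \ref{old}, resting on the observation that $\mul\Phi^*=\mul H_1\times\mul H_2$ is annihilated by $B_\oplus$ (each $B_j$ being trivial on $\ker A_j\oplus\mul A_j$); this is sound, and your direct verification of the inclusion $H_1+H_2\subset\Phi^{**}(I+iB_\oplus)\Phi^*$ is also fine. For the equivalence, the paper routes everything through Theorem \ref{s-sum-caracter}: extremality is converted into the existence of a closed operator $R$ with $C_0^{1/2}P_\sD\Psi^{**}\subset R\subset C_0^{1/2}(K^*)_\s$, the operator $C_0^{-1/2}R$ is identified with $P_\sD(\Phi^*)_\s$, the resulting identity is exploited for $f\in\dom\wh H$ only, and a core argument is then needed at the end to pass from $\dom \wh H$ to all of $\ran(\Phi^*)_\s=\sF_0$. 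You instead work entirely at the form level with the criterion that extremality means $\st_{\wh H}$ is a restriction of $\st_K$ (a criterion the paper quotes from \cite{HSSW17} and itself uses to establish Theorem \ref{s-sum-caracter}), and you compute $\st_K$ on $\dom\Phi^*$ explicitly via the corrector $\tilde\xi=\xi_1-i(I+iB_{22})^{-1}B_{12}^*\xi_1$; this $\tilde\xi$ is exactly the element $\varphi\in\sE$ appearing in the paper's proof, and your justification that $\tilde\xi\in\sE$ and $(I+iB_\oplus)\tilde\xi=C_0^{1/2}(I+iC)C_0^{1/2}\xi_1$ (via Remark \ref{rem3.2} and \eqref{sDsE}) is the same as the paper's, as is the claim that membership of $\xi$ in $\sE$ forces $\xi=\tilde\xi$. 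The decisive mechanism coincides in both proofs, namely that $((I+iB_\oplus)v,v)=0$ with $v\in\sF$ forces $v=0$ upon taking real parts; but your organization buys two simplifications: you never need Theorem \ref{s-sum-caracter} or the interpolating operator $R$, and since your difference identity holds on the whole form domain $\dom\Phi^*$, so that $\xi$ ranges over all of $\sF_0$ from the start, the paper's closing core argument becomes unnecessary. What the paper's longer route buys in exchange is the explicit operator representation $\wh H=R^*(I+iC)R$ of the form sum, which ties it directly into the abstract extension machinery of \cite{HSSW17}.
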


\begin{proof}
The first statement is proved in \cite[Theorem 3.5]{HSS19}.
For the proof of the equivalence of (i) and (ii) appropriate modifications are needed
in the arguments used in the proof of \cite[Theorem 3.5]{HSS19}.
The special case treated there was based on the additional assumption that $\sD=\sE$,
where $\sD=\cdom K$; a condition which implies the invariance of $\mul K^*$
under the operator $B_\oplus$. In the present general case such an invariance property cannot be assumed.
Now for simplicity denote the form sum extension of $H_1+H_2$ briefly by $\wh H=\Phi^{**} (I+iB_\oplus ) \Phi^*$.

(i) $\Rightarrow$ (ii)
Assume that $\wh H$ is extremal.
Since $\sE \subset \sF$ by \eqref{s-s-ef}, it is enough to prove
the inclusion $\sF \subset \sE$.
By Theorem \ref{s-sum-caracter} and $\mul \Phi^*=\mul H_{1} \times \mul H_{2}$
one sees
\begin{equation}
\label{NICE1}
 \wh H=((\Phi^{*})_{\s})^{*} (I+iB_\oplus ) (\Phi^{*})_{\s} = R^{\ast} (I+i C ) R,
\end{equation}
for some closed operator $R$ satisfying
\begin{equation}\label{Rincl}
  C_0^{1/2} P_\sD \Psi^{**} \subset   R \subset C_0^{1/2}(K^*)_\s,
\end{equation}
where $P_\sD$ is the orthogonal projection of $\sH \times \sH$ onto $\sD=\cdom K$.
Recall that $(\Phi^{*})_{\s} \subset \Phi^{*} \subset
K^{*}$ and hence $P_\sD(\Phi^{*})_{\s}\subset P_\sD K^*=(K^*)_{\s}$.
Moreover, one has $\dom P_\sD(\Phi^{*})_{\s} =\dom (\Phi^{*})_{\s} = \dom R$,
since by assumption these two domains coincide with the corresponding joint form domain.
Denoting $\wh R= C_0^{-1/2}R$, one has $\dom \wh R=\dom P_\sD(\Phi^{*})_{\s}$ and \eqref{NICE1} can be rewritten as
\begin{equation}
\label{NICE2}
 \wh H=((\Phi^{*})_{\s})^{*} (I+iB_\oplus ) (\Phi^{*})_{\s} = \wh R^{\ast}C_0^{1/2}(I+iC)C_0^{1/2}\wh R,
\end{equation}
where $\wh R$ satisfies $P_\sD \Psi^{**} \subset  \wh R \subset (K^*)_\s$.
One concludes that $P_\sD(\Phi^{*})_{\s}=\wh R$, since both operators are restrictions of $(K^*)_\s$, and thus
\begin{equation}\label{Rhat*}
 ((\Phi^{*})_{\s})^* P_\sD = \wh R^*.
\end{equation}
Now one obtains from \eqref{NICE2} the equalities
\begin{eqnarray}
 ((\Phi^{*})_{\s})^{*} (I+iB_\oplus ) (\Phi^{*})_{\s} & = & \wh R^{\ast}C_0^{1/2}(I+iC)C_0^{1/2}\wh R
 \nonumber \\
 & = & ((\Phi^{*})_{\s})^{*}P_{\sD} C_0^{1/2}(I+iC)C_0^{1/2}\wh R
 \nonumber \\
 & = & ((\Phi^{*})_{\s})^{*} C_0^{1/2}(I+iC)C_0^{1/2}\wh R . \nonumber
\end{eqnarray}
Hence, for every $f\in\dom \wh H$ one has
$$(I+iB_\oplus )(\Phi^{*})_{\s}f-C_0^{1/2}(I+iC)C_0^{1/2}\wh R f \in \ker ((\Phi^{*})_{\s})^{*}.$$
Here $C_0^{1/2}(I+iC)C_0^{1/2}\wh R f \in \sD=\cdom K$ and $\sD=\cdom K=(I+iB_\oplus) \sE$; see \eqref{sDsE}.
Therefore, there exists $\varphi\in \sE$ such that $C_0^{1/2}(I+iC)C_0^{1/2}\wh R f=(I+iB_\oplus)\varphi$.
On the other hand, $(\Phi^{*})_{\s}f \in \sF=\cran(\Phi^{*})_{\s}=(\ker ((\Phi^{*})_{\s})^{*})^\perp$, see \eqref{s-s-qus}, \eqref{F0}.
Since $\varphi\in \sE\subset \sF$, this yields
\[
 \left((I+iB_\oplus )((\Phi^{*})_{\s}f-\varphi ), (\Phi^{*})_{\s}f-\varphi \right)=0,
\]
and thus $(\Phi^{*})_{\s}f-\varphi=0$. Consequently, for all $f\in \dom \wh H$ one has
\[
 (\Phi^{*})_{\s}f\in \sE.
\]
Since $\dom \wh H$  is a core for the corresponding closed form, or equivalently, the closure of
$(\Phi^{*})_{\s} \uphar\dom \wh H$ is equal to
$(\Phi^{*})_{\s}$, the claim follows:
$\sF = \cran (\Phi^{*})_{\s} \subset \sE.$

(ii) $\Rightarrow$ (i) Assume that $\sE = \sF$.
Then $\sF_{0}=\ran (\Phi^{*})_{\s}\subset \sE$ and hence for all
$f\in \dom (\Phi^{*})_{\s}$ one has $(I+B_\oplus)(\Phi^{*})_{\s}f \in \cdom K$.
By Remark \ref{rem3.2} this implies that
\begin{equation}\label{neweq0}
 (I+B_\oplus)(\Phi^{*})_{\s}f = C_0^{1/2}(I+iC)C_0^{1/2} P_\sD (\Phi^{*})_{\s}f.
 \end{equation}
On the other hand, as shown above $P_\sD(\Phi^{*})_{\s}\subset P_\sD K^*=(K^*)_{\s}$.
Let $\wh R$ be the closure of $(K^*)_{\s}\uphar\dom (\Phi^{*})_{\s}$.
Then $\wh R^*$ satisfies the identity \eqref{Rhat*}.
Since $\Psi^{**}\subset (\Phi^*)_\s$ (see \eqref{Psiclos}) one obtains $P_\sD\Psi^{**}\subset \wh R$.
The identities \eqref{Rhat*} and \eqref{neweq0}
imply that for all $f\in \dom \wh H$ the equalities
\[
\begin{split}
   ((\Phi^{*})_{\s})^{*} (I+B_\oplus)(\Phi^{*})_{\s}f
   &  = ((\Phi^{*})_{\s})^{*} P_\sD C_0^{1/2}(I+iC)C_0^{1/2} P_\sD (\Phi^{*})_{\s} f \\
   &  = \wh R^* C_0^{1/2}(I+iC)C_0^{1/2} \wh R f
\end{split}
\]
hold. Then the closed operator $R=C_0^{1/2}\wh R$ satisfies the inclusions \eqref{Rincl}
as well as the desired identity
$((\Phi^{*})_{\s})^{*} (I+B_\oplus)(\Phi^{*})_{s}=R^*(I+iC)R$,
and thus $\wh H$ is  extremal,
cf. Theorem \ref{s-sum-caracter}.
\end{proof}

Theorem \ref{s-sum-een} is a generalization of \cite[Theorem 3.5]{HSS19}, where
an additional invariance of $\mul K^*$ under the operator $B_\oplus$ was used.
Moreover, Theorem \ref{s-sum-een} generalizes a corresponding result for
the form sum of two closed nonnegative forms established earlier in \cite[Theorem~4.1]{HSSW2007}.

The present result relies on Theorem \ref{s-repr}, where the description of
the closed sectorial form generated by a general maximal sectorial relation
of the form $H=T^*(I+iB)T$ where $T$ is a closed relation. This generality implies that
with special choices of $B$ the relation $H$ can be taken to be nonnegative and selfadjoint, i.e.,
the corresponding closed form $\st$ becomes nonnegative; see Example \ref{examp}.

\end{document}